\pgfplotsset{
    discard if/.style 2 args={
        x filter/.code={
            \edef\tempa{\thisrow{#1}}
            \edef\tempb{#2}
            \ifx\tempa\tempb
                
            \fi
        }
    },
    discard if not/.style 2 args={
        x filter/.code={
            \edef\tempa{\thisrow{#1}}
            \edef\tempb{#2}
            \ifx\tempa\tempb
            \else
                
            \fi
        }
    }
}
\definecolor{hcorange}{RGB}{245, 130, 48}
\definecolor{hcnavy}{RGB}{0, 0, 128}
\definecolor{hcblue}{RGB}{0, 130, 200}
\definecolor{hcpink}{RGB}{250, 190, 190}
\definecolor{hcyellow}{RGB}{255, 255, 255}
\definecolor{hcbrown}{RGB}{128, 0, 0}
\definecolor{hclavender}{RGB}{230, 190, 255}
\definecolor{hcgrey}{RGB}{128, 128, 128}
\definecolor{hcgreen}{RGB}{60, 180, 75}
\definecolor{hcred}{RGB}{230, 25, 75}
\pgfplotsset{
    /pgfplots/flexible xticklabels from table/.code n args={3}{%
        \pgfplotstableread[#3]{#1}\coordinate@table
        \pgfplotstablegetcolumn{#2}\of{\coordinate@table}\to\pgfplots@xticklabels
        \let\pgfplots@xticklabel=\pgfplots@user@ticklabel@list@x
    }
}
    \newcommand\ed[1]{\todo[color=black!10]{#1}}
    \newcommand\lc[1]{\todo[color=red!10]{#1}}
    \newcommand\eb[1]{\todo[color=blue!10]{#1}}
    \newcommand\edi[1]{\todo[color=black!10,inline]{#1}}
    \newcommand\lci[1]{\todo[color=red!10,inline]{#1}}
\newcommand\ed[1]{} \newcommand\edi[1]{} \newcommand\lci[1]{} \newcommand\lc[1]{} \newcommand\ebi[1]{} \newcommand\eb[1]{}}
\DeclareFontFamily{U}{mathx}{\hyphenchar\font45}
\DeclareFontShape{U}{mathx}{m}{n}{
      <5> <6> <7> <8> <9> <10>
      <10.95> <12> <14.4> <17.28> <20.74> <24.88>
      mathx10
      }{}
\DeclareSymbolFont{mathx}{U}{mathx}{m}{n}
\DeclareMathAccent{\widecheck}{0}{mathx}{"71}
\DeclareMathAccent{\wideparen}{0}{mathx}{"75}
\newcommand{\OO}[1]{\mathcal{O}\left(#1\right)}
\newcommand{\ske}{c}
\newcommand{\algo}{spaND}
\renewcommand{\H}{\mathcal{H}}
\newcommand{\st}{\text{size}_{\,\text{Top}}}
\newcommand{\mf}{\text{mem}_{\text{F}}}
\newcommand{\ncg}{\text{n}_{\text{CG}}}
\newcommand{\tf}{\text{t}_{\text{F}}}
\newcommand{\tp}{\text{t}_{\text{P}}}
\newcommand{\ts}{\text{t}_{\text{S}}}
\newcommand{\rev}[1]{#1}
\newcommand{\revrev}[1]{#1}
\newcommand{\TheTitle}{An Algebraic Sparsified Nested Dissection Algorithm using Low-Rank Approximations}
\newcommand{\TheAuthors}{L. Cambier, C. Chen, E. Boman, S. Rajamanickan, R. Tuminaro, E. Darve}
\title{{\TheTitle}}
\author{
    L{\'e}opold Cambier\thanks{Institute for Computational \& Mathematical Engineering, Stanford University, \email{lcambier@stanford.edu}}
    \and
    Chao Chen\thanks{Institute for Computational Engineering and Sciences, The University of Texas at Austin, \email{chenchao.nk@gmail.com}}
    \and
    Erik~G. Boman \thanks{Center for Computing Research, Sandia National Laboratories, \email{egboman@sandia.gov}}
    \and
    Sivasankaran Rajamanickam\thanks{Center for Computing Research, Sandia National Laboratories, \email{srajama@sandia.gov}}
    \and
    Raymond~S. Tuminaro\thanks{Center for Computing Research, Sandia National Laboratories, \email{rstumin@sandia.gov}}
    \and
    Eric Darve\thanks{Department of Mechanical Engineering, Stanford University, \email{darve@stanford.edu}}
}
\begin{document}

\maketitle

\begin{abstract} 
We propose a new algorithm for the fast solution of large, sparse, symmetric positive-definite linear systems,
spaND --- sparsified Nested Dissection.
It is based on nested dissection, sparsification and low-rank compression.
After eliminating all interiors at a given level of the elimination tree, the algorithm sparsifies all separators corresponding
to the interiors. 
This operation reduces the size of the separators by eliminating some degrees of freedom but without introducing
any fill-in. This is done at the expense of a small and controllable approximation error. 
The result is an approximate factorization that can be used as an efficient preconditioner.
We then perform several numerical experiments to evaluate this algorithm.
We demonstrate that a version using orthogonal factorization and block-diagonal scaling takes \rev{fewer} CG iterations
to converge than previous similar algorithms on various kinds of problems.
Furthermore, this algorithm is provably guaranteed to never break down and the matrix stays symmetric positive-definite throughout the process. 
We evaluate the algorithm on some large problems \rev{and} show it exhibits near-linear scaling. The factorization time is roughly
$\OO{N}$ and the number of iterations grows slowly with $N$.
\end{abstract}

\begin{keywords}
sparse linear solver, hierarchical matrix, nested dissection, preconditioner, low-rank
\end{keywords}

\begin{AMS}
65F05, 65F08, 65Y20
\end{AMS}

\section{Introduction}

We are interested in solving large symmetric, positive-definite (SPD) sparse linear systems
\begin{equation}\label{eq:linear_system} Ax = b,\; A \in \mathbb{R}^{N \times N}. \end{equation}
In particular, we focus on linear systems with similar properties as those arising from the discretization of elliptic
partial differential equations, using finite difference or finite elements for instance. Solving such systems is a crucial part of many scientific simulations.

Algorithms for solving \autoref{eq:linear_system} are traditionally divided into three categories.
On one hand are direct methods. The naive Cholesky ($A = L L^\top$) factorization can lead to a factorization 
cost of $\OO{N^3}$ \rev{(with $\OO{N^2}$ memory use)} due to fill-in in the factor $L$.
When the matrix $A$ comes from the discretization of PDE's in 2D or 3D space, one usually uses the Nested Dissection 
\cite{lipton1979generalized} ordering to reduce fill-in. 
By doing so, the \rev{time} complexity is typically reduced to $\OO{N^{3/2}}$ (in 2D)
and $\OO{N^2}$ (in 3D), \rev{with the memory complexity reduced to $\OO{N \log N}$ (in 2D) and $\OO{N^{4/3}}$ (in 3D)}
\cite{george1973nested, lipton1979generalized}.
This is what most state-of-the-art direct solvers are built upon \cite{chen2008algorithm, amestoy2000mumps, henon2002pastix}. 
Those algorithms work very well for most moderate-size problems. However, the $\OO{N^2}$ complexity in 3D makes them 
intractable on large scale problems.

An alternative is to use iterative algorithms like Krylov methods or multigrid.
Multigrid \cite{fedorenko1962relaxation, bramble1993multigrid, hackbusch2013multi} (and its algebraic version, \cite{brandt1982algebraic, stuben2001review})
works very well on fairly regular elliptic \rev{PDEs}, usually with a near-constant iteration count \rev{and $\OO{N}$ memory use} regardless of the problem size. 
However, it can solve only a fairly limited range of problems and its iteration count can start growing when the problem becomes ill-conditioned.
Krylov methods, such as CG \cite{hestenes1952methods, van1992bi}, MINRES \cite{paige1975solution} or GMRES 
\cite{saad1986gmres} can be applied to a very wide range of problems, necessitating only sparse 
matrix-vector products. However, to converge at all, one needs to always couple them with an efficient preconditioner. 
This is typically a very problem dependent task.

One way, however, to build preconditioners is using incomplete factorizations and low-rank approximations. 
Incomplete factorization algorithms are built on top of a classical matrix factorization algorithm. Incomplete LU (ILU) 
for instance starts with a classical LU algorithm and ignores some of the fill-in based on thresholding and on an artificially
prescribed maximum number of non-zeros in every row \& column \cite{saad1994ilut}. 
Block versions \cite{saad1999bilum} are sometimes used because of better robustness (with possible pivoting)
and practical properties (cache-friendly algorithm, use of BLAS, etc.). Once an incomplete 
LU factorization has been computed, it can be used as a preconditioner for a CG of GMRES algorithm for instance.

Matrices arising from PDE discretization also typically have low-rank off-diagonal blocks \cite{bebendorf2003existence, 
bebendorf2005efficient, chandrasekaran2010numerical}. More precisely, the fill-in arising when factoring the matrix 
typically has small numerical rank, with weak dependence on $N$. 
This is closely related to the existence of a smooth Green's function for the underlying PDE and to the Fast Multipole 
Method \cite{barnes1986hierarchical, Greengard1987, fong2009black}.
Matrices built using this property are broadly called Hierarchical ($\H$) matrices \cite{hackbusch1999sparse}.
Many formats exist, depending on when and how off-diagonal blocks are compressed into low-rank format. The Hierarchical 
Off-Diagonal Low Rank (HODLR) \cite{ambikasaran2013fast} format compresses all off-diagonal blocks. If the off-diagonal 
are compressed using a nested basis, we obtain Hierarchically Semi-Separable (HSS) matrices 
\cite{chandrasekaran2004fast, chandrasekaran2005some, chandrasekaran2006fast, xia2010fast}. 
Finally, the broader category of $\H^2$ matrices also uses nested basis but only compresses well-separated (i.e., far-field) 
interactions (\cite{hackbusch2002data, hackbusch2002h2, yang2016sparse}, \cite{pouransari2017fast} with LoRaSp \rev{and \cite{sushnikova2018compress} with the ``Compress and Eliminate'' solver).}
All of those representations lead to a data-sparse representation of the matrix with tunable accuracy (by making the low-rank approximations more or less
accurate) and fast inverse computations.
This can then be used to construct preconditioners.
These constructions, while asymptotically efficient, sometimes have fairly large constants.

Attempts to improve the practical performance rely on exploiting sparsity as well as the low rank structure.
Most approaches up to date have focused on incorporating fast (i.e., $\H$-) algebra into the classical Nested Dissection
algorithm \cite{grasedyck2009domain} in order to decrease the cost of working with large fronts.
Other works have taken the similar approach of incorporating rank structured matrices into a multifrontal factorization 
in order to compress the large dense frontal matrices. 
\rev{HSS is often used to compress the large frontal matrices} \cite{xia2009superfast, schmitz2012fast, xia2013efficient, xia2013randomized, ghysels2016efficient}.
The last one was incorporated into the Strumpack package.
\cite{amestoy2015improving} uses Block Low-Rank approximation to compress the frontal matrices in the MUMPS solver 
\cite{amestoy2000mumps}. Finally, \cite{faverge:hal-01187882} studies the use of $\H$-matrices using HODLR in the
PaStiX solver \cite{henon2002pastix}.

The Hierarchical Interpolative Factorization (HIF) \cite{ho2016hierarchical} proposes a different approach. 
Instead of storing the full dense fronts in some low-rank format, it uses low-rank approximation to directly sparsify 
(i.e., eliminate part of) the Nested Dissection separators without introducing any fill-in. 
As a result, the algorithm never deals with large edges (in low-rank format or not) but instead constantly reduces
the size of all edges and separators.
This is the approach we take.

\revrev{We finally mention some recent work by J. Xia \& Z. Xin \cite{xia2017effective} and J. Feliu-Fab{\`a} et al. \cite{feliu2018recursively} where, in both cases, a scale-then-compress approach is taken.
Our algorithm shares similarities with those, as we also scale the matrix block using the Cholesky factorization of the
pivot}. As we will see, this significantly improves the preconditioner's accuracy.

\subsection{Contribution}

Our approach is based on the idea of HIF described in \cite{ho2016hierarchical}. 
However, there are several differences, improvements and novel capabilities:
\begin{itemize}
    \item Our algorithm is completely general and can be applied to any (SPD) matrix. 
        The only required input is the sparse matrix itself. 
        If geometry information is available, it can be used to improve the quality of the ordering and clustering.
    \item We incorporate an additional diagonal block scaling step in the algorithm, greatly improving the accuracy
of the preconditioner for only a small additional cost;
    \item We use an orthogonal (instead of interpolative) transformation, improving stability and guaranteeing that the
preconditioner stays SPD when A is SPD;
    \item We test the algorithm on more and larger test problems.
\end{itemize}

In a nutshell, our algorithm is based on a couple of key ideas. First, we start with a nested dissection (ND) ordering.
Then following the idea introduced in \cite{ho2016hierarchical}, after each elimination step, we sparsify the interfaces
between just-eliminated interiors, effectively reducing the size of \emph{all} ND separators. 
This is done using low-rank approximation, allowing to sparsify the separators without introducing any fill-in.
We then merge clusters and proceed to the next level.

A natural consequence of the above algorithm is that, if the compression fails to reduce the size of the separators, the
algorithm reverts to a (slower, but still relatively efficient) Nested Dissection algorithm.

\subsection{Contrast with fast algebra based algorithms}

We emphasize that the HIF approach \cite{ho2016hierarchical} \rev{and ours} are different from the classical way of accelerating sparse direct solvers.
Consider for instance the top separator of a Nested Dissection elimination.
At the end of the elimination, the corresponding (very large) block in the matrix is typically dense.
MUMPS \cite{amestoy2015improving} and PaStiX \cite{faverge:hal-01187882} for instance use fast $\H$-algebra to compress this block. 
This allows for fast factorization, inversion, etc.

As indicated above, we take a different approach. 
Instead of storing large blocks (corresponding to large separators) in low-rank format (typically using $\H$-matrices), we eliminate part 
of the separators \rev{\emph{right from the beginning}}, effectively reducing their size.
We do so without introducing any fill-in, but at the expense of an approximate factorization.
As a result, the top separator \rev{remains} dense but is much smaller than at the beginning.

\rev{Both approaches use some sort of hierarchical clustering of the unknowns. The difference lies in the order of operations.
In the first category (large blocks using fast $\H$-algebra) elimination is delayed until the end. The result are large and dense but hierarchically low-rank fronts.
In our approach (like in \cite{ho2016hierarchical}), fronts are kept small throughout the factorization by eliminating unknowns related to low-rank interactions as soon as possible.}

\subsection{Organization of the paper}

This paper is organized in three sections. First, \autoref{sec:spand} introduces and motivates the algorithm, starting at 
a high level and later introducing the details. Then, \autoref{sec:theoretical_results} proves the stability of the scheme, discusses
the choice of the low-rank approximation \rev{and provides a complexity analysis}.
Finally, \autoref{sec:numerical_experiments} shows numerical experiments on medium and large scale matrices.

\section{Sparsified Nested Dissection} \label{sec:spand}

This section describes the algorithm in detail.
We start by discussing Nested Dissection and some of its characteristics. Then, building upon it, we introduce our 
algorithm, and then detail all the various parts.

\subsection{Classical Nested Dissection Ordering} \label{sec:nested_dissection_ordering}

Nested Dissection (ND) \cite{george1973nested} is an ordering strategy to factor sparse matrices.
Consider a sparse symmetric matrix $A \in \mathbb{R}^{N\times N}$ and its graph $G_A = (V,E)$ defined as $V = \{1,\dots,N\}$ 
and $E = \{(i,j): A_{ij} \neq 0\}$. Notice that since $A$ is symmetric, the graph is undirected.
The basic building block of ND is the computation of \emph{vertex-separators}. 
Starting with the full graph, one finds a cluster of vertices, a vertex-separator, separating the graph into two disconnected 
clusters (a cluster is a subset of $V$). 

\autoref{sfig:classical_ND_ordering_b} gives an example of such a separator.
The idea is then applied recursively as indicated in \autoref{sfig:classical_ND_ordering_c} and \autoref{sfig:classical_ND_ordering_d}.
That is, disconnected clusters are further sub-divided using separators. 
This recursive process is repeated until cluster sizes are small enough to be factored using some direct dense method. 
A matrix factorization can begin by eliminating unknowns in all disconnected clusters defined by the last recursive level of 
the nested dissection process. 
Thus, the only non-eliminated unknowns correspond to degrees-of-freedom (dofs) associated with all of the separators. 
We then proceed to eliminate all dofs associated with the last set of separators (e.g., those defined in the $\ell = 3$ level of \autoref{fig:classical_nested_dissection}). 
Once these have been eliminated, we proceed by eliminating the second to last set of separator unknowns (e.g., those defined on $\ell =2$ in \autoref{fig:classical_nested_dissection}). 
The process continues eliminating unknowns associated with successively lower levels. 
This process can be viewed as an elimination tree, illustrated in \autoref{fig:classical_nested_dissection_etree}.

The elimination tree indicates dependencies between operations. Each node is a cluster in the graph of $A$ (separator or leaf-interior), 
and a cluster can only be eliminated once all its \rev{descendants} have been eliminated. 
The clusters are then eliminated from bottom to top.
This follows from the fact that eliminating a parent before a child would create edges between clusters 
previously separated, breaking the purpose of the ordering.
ND is an ordering that limits fill-in: by eliminating clusters from bottom to top, one never creates edges 
(i.e., fill-in) between clusters previously separated.

\begin{figure}
\centering
    \subfloat[\label{sfig:classical_ND_ordering_a}$\ell = 0$]{\begin{tikzpicture}
        \def \l{1.3};
        \draw[fill=white,draw=none] (-\l,\l) rectangle (\l,3*\l);
        \begin{scope}[every node/.style={draw,circle}]
            \node (a) at (0,2*\l) {};
        \end{scope}
        \draw [rounded corners=3pt] (-\l,-\l) rectangle (\l,\l);
    \end{tikzpicture}} \,
    \subfloat[\label{sfig:classical_ND_ordering_b}$\ell = 1$]{\begin{tikzpicture}
        \def \l{1.3};
        \draw[fill=white,draw=none] (-\l,\l) rectangle (\l,3*\l);
        \begin{scope}[every node/.style={draw,circle}]
            \node (a) at (-0.7*\l,2*\l) {};
            \node [fill=black!80] (b) at (0,2*\l) {};
            \node (c) at (0.7*\l,2*\l) {};
        \end{scope}
        \begin{scope}
            \draw[-] (a) edge (b);
            \draw[-] (b) edge (c);
        \end{scope}
        \draw [rounded corners=3pt] (-\l,-\l) rectangle (\l,\l);
        \draw [rounded corners=3pt,fill=black!80] (-0.075*\l,-\l) rectangle (0.075*\l,\l);
    \end{tikzpicture}} \,
    \subfloat[\label{sfig:classical_ND_ordering_c}$\ell = 2$]{\begin{tikzpicture}
        \def \l{1.3};
        \draw[fill=white,draw=none] (-\l,\l) rectangle (\l,3*\l);
        \begin{scope}[every node/.style={draw,circle}]
            \node[fill=black!80] (a) at (0 ,2*\l) {};
            \node[fill=black!50] (b) at (-0.7*\l,2*\l) {};
            \node[fill=black!50] (c) at (0.7*\l,2*\l) {};
            \node (d) at (-0.7*\l,2.3*\l) {};
            \node (e) at (-0.7*\l,1.7*\l) {};
            \node (f) at (0.7*\l,2.3*\l) {};
            \node (g) at (0.7*\l,1.7*\l) {};
        \end{scope}
        \begin{scope}
            \draw [-] (a) edge (b);
            \draw [-] (a) edge (c);
            \draw [-] (a) edge (d);
            \draw [-] (a) edge (e);
            \draw [-] (a) edge (f);
            \draw [-] (a) edge (g);
            \draw [-] (b) edge (d);
            \draw [-] (b) edge (e);
            \draw [-] (c) edge (f);
            \draw [-] (c) edge (g);
        \end{scope}
        \draw [rounded corners=3pt] (-\l,-\l) rectangle (\l,\l);
        \draw [rounded corners=3pt,fill=black!80] (-0.075*\l,-\l) rectangle (0.075*\l,\l);
        \draw [rounded corners=3pt,fill=black!50] (-\l,0.1*\l) rectangle (-0.075*\l,0.25*\l);
        \draw [rounded corners=3pt,fill=black!50] (0.075*\l,-0.15*\l) rectangle (\l,-0.3*\l);
    \end{tikzpicture}} \,
    \subfloat[\label{sfig:classical_ND_ordering_d}$\ell = 3$]{\begin{tikzpicture}
        \def \l{1.3};
        \draw[fill=white,draw=none] (-\l,\l) rectangle (\l,3*\l);
        \begin{scope}[every node/.style={draw,circle}]
            \node[fill=black!80] (a) at (0 ,2*\l) {};
            \node[fill=black!50] (b) at (-0.7*\l,2*\l) {};
            \node[fill=black!50] (c) at (0.7*\l,2*\l) {};
            \node[fill=black!25] (d) at (-0.7*\l,2.3*\l) {};
            \node[fill=black!25] (e) at (-0.7*\l,1.7*\l) {};
            \node[fill=black!25] (f) at (0.7*\l,2.3*\l) {};
            \node[fill=black!25] (g) at (0.7*\l,1.7*\l) {};
            \node (h) at (-1.1*\l,2.3*\l) {};
            \node (i) at (-0.3*\l,2.3*\l) {};
            \node (j) at (-1.1*\l,1.7*\l) {};
            \node (k) at (-0.3*\l,1.7*\l) {};
            \node (l) at (1.1*\l,2.3*\l) {};
            \node (m) at (0.3*\l,2.3*\l) {};
            \node (n) at (0.3*\l,1.7*\l) {};
            \node (o) at (1.1*\l,1.7*\l) {};
        \end{scope}
        \begin{scope}
            \draw [-] (a) edge (b);
            \draw [-] (a) edge (c);
            \draw [-] (a) edge (d);
            \draw [-] (a) edge (e);
            \draw [-] (a) edge (f);
            \draw [-] (a) edge (g);
            \draw [-] (b) edge (d);
            \draw [-] (b) edge (e);
            \draw [-] (c) edge (f);
            \draw [-] (c) edge (g);
            \draw [-] (h) edge (a);
            \draw [-] (h) edge (b);
            \draw [-] (h) edge (d);
            \draw [-] (i) edge (a);
            \draw [-] (i) edge (b);
            \draw [-] (i) edge (d);
            \draw [-] (j) edge (a);
            \draw [-] (j) edge (b);
            \draw [-] (j) edge (e);
            \draw [-] (k) edge (a);
            \draw [-] (k) edge (b);
            \draw [-] (k) edge (a);
            \draw [-] (l) edge (a);
            \draw [-] (l) edge (c);
            \draw [-] (l) edge (f);
            \draw [-] (m) edge (a);
            \draw [-] (m) edge (c);
            \draw [-] (m) edge (f);
            \draw [-] (n) edge (a);
            \draw [-] (n) edge (c);
            \draw [-] (n) edge (g);
            \draw [-] (o) edge (a);
            \draw [-] (o) edge (c);
            \draw [-] (o) edge (g);
        \end{scope}
        \draw [rounded corners=3pt] (-\l,-\l) rectangle (\l,\l);
        \draw [rounded corners=3pt,fill=black!80] (-0.075*\l,-\l) rectangle (0.075*\l,\l);
        \draw [rounded corners=3pt,fill=black!50] (-\l,0.1*\l) rectangle (-0.075*\l,0.25*\l);
        \draw [rounded corners=3pt,fill=black!50] (0.075*\l,-0.15*\l) rectangle (\l,-0.3*\l);
        \draw [rounded corners=3pt,fill=black!25] (-\l,0.5*\l) rectangle (-0.075*\l,0.65*\l);
        \draw [rounded corners=3pt,fill=black!25] (0.5*\l,-0.15*\l) rectangle (0.65*\l,\l);
        \draw [rounded corners=3pt,fill=black!25] (-\l,-0.55*\l) rectangle (-0.075*\l,-0.7*\l);
        \draw [rounded corners=3pt,fill=black!25] (0.075*\l,-0.55*\l) rectangle (\l,-0.7*\l);
    \end{tikzpicture}} 
    \caption{Classical Nested Dissection ordering.}
    \label{fig:classical_nested_dissection}
\end{figure}
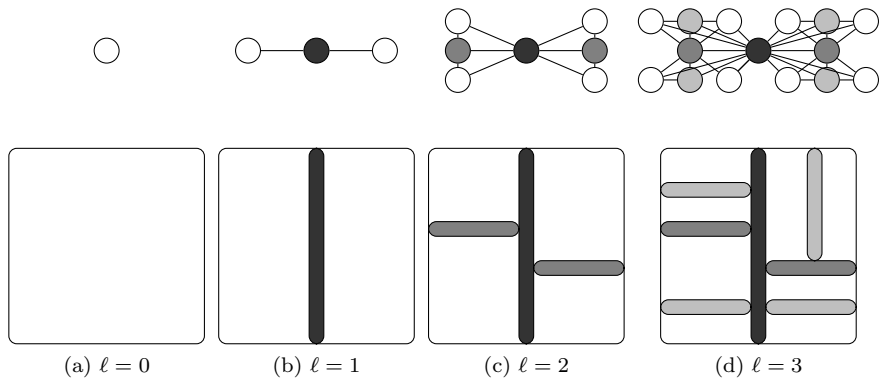
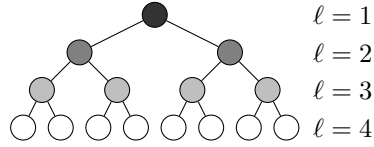
\begin{figure}
    \centering
    \begin{tikzpicture}[auto, node distance=3cm]
        \begin{scope}[every node/.style={draw,circle}]
            \node[fill=black!80] (a) at (0 ,0) {};
            \node[fill=black!50] (b) at (-1,-0.5) {};
            \node[fill=black!50] (c) at (1,-0.5) {};
            \node[fill=black!25] (d) at (-1.5,-1) {};
            \node[fill=black!25] (e) at (-0.5,-1) {};
            \node[fill=black!25] (f) at (0.5,-1) {};
            \node[fill=black!25] (g) at (1.5,-1) {};
            \node (h) at (-1.75,-1.5) {};
            \node (i) at (-1.25,-1.5) {};
            \node (j) at (-0.75,-1.5) {};
            \node (k) at (-0.25,-1.5) {};
            \node (l) at (0.25,-1.5) {};
            \node (m) at (0.75,-1.5) {};
            \node (n) at (1.25,-1.5) {};
            \node (o) at (1.75,-1.5) {};
            \node[draw=none] at (2.5,0) {$\ell=1$};
            \node[draw=none] at (2.5,-0.5) {$\ell=2$};
            \node[draw=none] at (2.5,-1) {$\ell=3$};
            \node[draw=none] at (2.5,-1.5) {$\ell=4$};
        \end{scope}
        \begin{scope}
            \draw [-] (a) edge (b);
            \draw [-] (a) edge (c);
            \draw [-] (b) edge (d);
            \draw [-] (b) edge (e);
            \draw [-] (c) edge (f);
            \draw [-] (c) edge (g);
            \draw [-] (d) edge (h);
            \draw [-] (d) edge (i);
            \draw [-] (e) edge (j);
            \draw [-] (e) edge (k);
            \draw [-] (f) edge (l);
            \draw [-] (f) edge (m);
            \draw [-] (g) edge (n);
            \draw [-] (g) edge (o);
        \end{scope}
    \end{tikzpicture}
    \caption{The elimination tree associated to the ND ordering on \autoref{fig:classical_nested_dissection}}
    \label{fig:classical_nested_dissection_etree}
\end{figure}

The elimination procedure can also be represented in matrix form.
Denote the total number of levels by $L$ (where the leaves correspond to $\ell = L$ and the root to $\ell = 1$).
Define $A^{(L)}$ as the entire matrix and let \rev{$A^{(\ell)}$}  (for $\ell < L$) be the Schur complement operator (trailing matrix) obtained by eliminating levels $\ell+1,\dots,L$. 
The matrix obtained after eliminating the $\ell+1$ level can be written in a block-arrowhead form
\[ A^{(\ell)} =    \begin{bmatrix}      A^{(\ell)}_{11} &                &                   & A^{(\ell)}_{1q} \\
                                                        &  \ddots        &                   & \vdots \\
                                                        &                & A^{(\ell)}_{mm}   & A^{(\ell)}_{mq} \\
                                        A^{(\ell)}_{q1} &  \dots         & A^{(\ell)}_{qm}   & A^{(\ell)}_{qq} \end{bmatrix} \]
where $m = 2^{\ell-1}$ and $q = m+1$. Here,  $A^{(\ell)}_{qq}$ refers to the matrix associated with all separators at levels $1,\dots,\ell-1$.
The $\rev{A^{(\ell)}_{ii}}$ (for $i \le m$) are the matrices associated with non-eliminated unknowns within the $i^{th}$ disconnected separators on the $\ell^{th}$ level. 
The Schur complement can now be written as
\[ A^{(\ell-1)} = A^{(\ell)}_{qq} - \sum_{i=1}^m \rev{A^{(\ell)}_{qi}}   \left(\rev{A^{(\ell)}_{ii}}\right)^{-1}  \rev{A^{(\ell)}_{iq}}. \]
This new matrix can then be interpreted as another block-arrowhead matrix associated with level $\ell-1$ and so the elimination procedure can be repeated. 

While limited, the fill-in is still significant. For instance, once all descendants of the top separators have 
been eliminated, the top separator is typically completely filled (dense).
For problems arising from the discretization of PDE's in 3D with $\OO{N} = \OO{n^3}$ degrees of freedom (dofs), 
the top separator typically has size $\OO{N^{2/3}} = \OO{n^2}$. For instance in a regular $n \times n \times n$ cube with 
$N = n^3$ dofs and a 7-points stencil (or any other stencil with only ``local'' connections), the top separator is a 
plane (see \autoref{fig:nested_dissection_3D}) of size $n \times n = n^2 = N^{2/3}$. Hence, its factorization will cost 
$\OO{N^{2/3 \times 3}} = \OO{N^2}$, leading to quadratic or near-quadratic algorithms. While this is only formally valid 
on regular cubic-shaped graphs, the issue extends beyond those problems \cite{lipton1979generalized}: the separators in 
3D graphs are typically very large, leading to large Schur complements and an expensive factorization, with complexity well above $\OO{N}$.
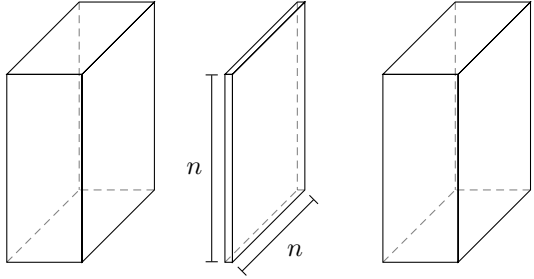
\begin{figure}
\centering
\begin{tikzpicture}[every edge quotes/.append style={auto}]
    \pgfmathsetmacro{\cubex}{1}
    \pgfmathsetmacro{\cubey}{2.5}
    \pgfmathsetmacro{\cubez}{2.5}
    \draw [draw=black, every edge/.append style={draw=black, densely dashed, opacity=.5}]
      (0,0,0) coordinate (o) -- ++(-\cubex,0,0) coordinate (a) -- ++(0,-\cubey,0) coordinate (b) edge coordinate [pos=1] (g) ++(0,0,-\cubez)  -- ++(\cubex,0,0) coordinate (c) -- cycle
      (o) -- ++(0,0,-\cubez) coordinate (d) -- ++(0,-\cubey,0) coordinate (e) edge (g) -- (c) -- cycle
      (o) -- (a) -- ++(0,0,-\cubez) coordinate (f) edge (g) -- (d) -- cycle;
    \draw [draw=black, every edge/.append style={draw=black, densely dashed, opacity=.5}]
      (5,0,0) coordinate (o) -- ++(-\cubex,0,0) coordinate (a) -- ++(0,-\cubey,0) coordinate (b) edge coordinate [pos=1] (g) ++(0,0,-\cubez)  -- ++(\cubex,0,0) coordinate (c) -- cycle
      (o) -- ++(0,0,-\cubez) coordinate (d) -- ++(0,-\cubey,0) coordinate (e) edge (g) -- (c) -- cycle
      (o) -- (a) -- ++(0,0,-\cubez) coordinate (f) edge (g) -- (d) -- cycle;
    \pgfmathsetmacro{\cubex}{0.1}
    \pgfmathsetmacro{\cubey}{2.5}
    \pgfmathsetmacro{\cubez}{2.5}
    \draw [draw=black, every edge/.append style={draw=black, densely dashed, opacity=.5}]
      (2,0,0) coordinate (o) -- ++(-\cubex,0,0) coordinate (a) -- ++(0,-\cubey,0) coordinate (b) edge coordinate [pos=1] (g) ++(0,0,-\cubez)  -- ++(\cubex,0,0) coordinate (c) -- cycle
      (o) -- ++(0,0,-\cubez) coordinate (d) -- ++(0,-\cubey,0) coordinate (e) edge (g) -- (c) -- cycle
      (o) -- (a) -- ++(0,0,-\cubez) coordinate (f) edge (g) -- (d) -- cycle;
    \path [every edge/.append style={draw=black, |-|}]
      (b) +(-5pt,0) coordinate (b2) edge ["$n$"] (b2 |- a)
      (c) +(3.5pt,-3.5pt) coordinate (c2) edge ["$n$"'] ([xshift=3.5pt,yshift=-3.5pt]e);
\end{tikzpicture}
\caption{Classical ND in $3D$ with $N = n^3$ nodes: the top separator is of size $\OO{n^2}$. 
Once the left and right clusters have been eliminated the top separator becomes completely dense, 
making its elimination alone cost $\OO{(n^2)^3} = \OO{N^2}$.}
\label{fig:nested_dissection_3D}
\end{figure}

Our algorithm addresses this specific concern by continually decreasing the size of all separators to 
keep fill-in to a minimum. It does so using low-rank approximations, and the factorization is then only approximate.
In most cases under consideration, the separator size is typically decreased to $\OO{n} = \OO{N^{1/3}}$ so that 
its factorization costs $\OO{N}$.

\subsection{Sparsified Nested Dissection}
\label{sec:sparsification_high_level}

As noted, the sub-blocks created by the repeated Schur complement process become denser.  
To further limit fill-in, we introduce a sparsification algorithm that is invoked after all eliminations associated with a particular level have been performed. 
To motivate the sparsification, let us first consider a very simple case. Suppose \rev{we} are about to eliminate level $\ell$ and 
that there exist a subset of the $j$\textsuperscript{th} separator such that the corresponding rows in $A_{jq}^{(\ell)}$ are relatively small.
An inexact or incomplete factorization can be defined by simply ignoring these small rows. 
This effectively decouples those unknowns from the rest of the system so that they can be eliminated right away, 
without causing an increase in the number of non-zeros in the next recursive Schur complement. 
We can think of this as decreasing the size of the $j$\textsuperscript{th} separator.  
While $A^{(\ell)}_{jq}$  will not generally have small rows, we instead seek a transformation that produces the desired small 
rows without altering the nonzero structure of the matrix.
Further, this transformation will not only be applied to the off-diagonal blocks in the level $\ell$ arrowhead matrix, 
but to all the non-eliminated degrees of freedom (dofs) (including those in $A^{(\ell)}_{qq})$. 
This \rev{means} we will effectively decrease the size of all remaining separators at levels $1,\dots,\ell$.

\begin{figure}
    \subfloat[The separators\label{sfig:separator_sparsification_process_a}]{
        \begin{tikzpicture}
            \def \l{1.3};
            \draw [rounded corners=3pt,fill=black!40] (-\l,-\l) rectangle (-0.075*\l,0.15*\l);
            \draw [rounded corners=3pt,fill=black!40] (-\l,0.30*\l) rectangle (-0.075*\l,\l);
            \draw [rounded corners=3pt,fill=black!40] (0.075*\l,-\l) rectangle (\l,-0.5*\l);
            \draw [rounded corners=3pt,fill=black!40] (0.075*\l,-0.35*\l) rectangle (\l,\l);
            \draw [rounded corners=3pt] (-0.075*\l,-\l) rectangle (0.075*\l,\l);
            \draw [rounded corners=3pt] (-\l,0.15*\l) rectangle (-0.075*\l,0.3*\l);
            \draw [rounded corners=3pt] (0.075*\l,-0.5*\l) rectangle (\l,-0.35*\l);
        \end{tikzpicture}
    }\;
    \subfloat[The interfaces\label{sfig:separator_sparsification_process_b}]{
        \begin{tikzpicture}
            \def \l{1.3};
            \draw [rounded corners=3pt,fill=black!40] (-\l,-\l) rectangle (-0.075*\l,0.15*\l);
            \draw [rounded corners=3pt,fill=black!40] (-\l,0.30*\l) rectangle (-0.075*\l,\l);
            \draw [rounded corners=3pt,fill=black!40] (0.075*\l,-\l) rectangle (\l,-0.5*\l);
            \draw [rounded corners=3pt,fill=black!40] (0.075*\l,-0.35*\l) rectangle (\l,\l);
            \draw [rounded corners=3pt] (-0.075*\l,-\l) rectangle (0.075*\l,-0.5*\l);
            \draw [rounded corners=3pt] (-0.075*\l,-0.5*\l) rectangle (0.075*\l,-0.35*\l);
            \draw [rounded corners=3pt] (-0.075*\l,-0.35*\l) rectangle (0.075*\l,0.15*\l);
            \draw [rounded corners=3pt] (-0.075*\l,0.15*\l) rectangle (0.075*\l,0.3*\l);
            \draw [rounded corners=3pt] (-0.075*\l,0.3*\l) rectangle (0.075*\l,\l);
            \draw [rounded corners=3pt] (-\l,0.15*\l) rectangle (-0.075*\l,0.3*\l);
            \draw [rounded corners=3pt] (0.075*\l,-0.5*\l) rectangle (\l,-0.35*\l);
        \end{tikzpicture}
    }\;
    \subfloat[][Before sparsification\label{sfig:separator_sparsification_process_c}]{\begin{tikzpicture}[every node/.style={circle,draw,inner sep=0pt,text width=5pt}]
        \def \l{1.3};
        \foreach \y in {-4,...,4}
           \node (0\y) at (0,0.25*\y*\l) {};
        \foreach \x in {-4,...,-1}
           \node (\x0) at (0.25*\x*\l,0) {};
        \foreach \x in {1,...,4}
           \node (\x0) at (0.25*\x*\l,-0.5*\l) {};
        \foreach \x in {-4,...,4}
            \foreach \y in {-4,...,4}
                \draw [-] (\x0) edge [black!50] (0\y);
        \draw [rounded corners=2pt,fill=white] (-0.125*\l,-1.125*\l) rectangle (0.125*\l,1.125*\l);
        \draw [rounded corners=2pt,fill=white] (-1.125*\l,-0.125*\l) rectangle (-0.125*\l,0.125*\l);
        \draw [rounded corners=2pt,fill=white] (0.125*\l,-0.625*\l) rectangle (1.125*\l,-0.375*\l);
        \begin{scope}[every node/.style={}]
            \node at (0.25*\l,\l) {$p$};
            \node at (-\l,0.25*\l)  {$n$};
        \end{scope}
        \foreach \y in {1,...,4}
            \node[fill=black!75] at (0,0.25*\y*\l) {};
        \foreach \y in {-2,...,0}
            \node[fill=black!25] at (0,0.25*\y*\l) {};
        \foreach \x in {-4,...,-1}
            \node[fill=black!25] at (0.25*\x*\l,0) {};
        \foreach \x in {1,...,4}
            \node[fill=black!25] at (0.25*\x*\l,-0.5*\l) {};
        \foreach \y in {-4,...,-1}
            \node at (0,0.25*\y*\l) {};
        \end{tikzpicture}
    }\;
    \subfloat[After sparsification\label{sfig:separator_sparsification_process_d}]{\begin{tikzpicture}[every node/.style={circle,draw,inner sep=0pt,text width=5pt}]
        \def \l{1.3};
        \foreach \y in {-4,...,4}
            \node (0\y) at (0,0.25*\y*\l) {};
        \foreach \x in {-4,...,-1}
            \node (\x0) at (0.25*\x*\l,0) {};
        \foreach \x in {1,...,4}
            \node (\x0) at (0.25*\x*\l,-0.5*\l) {};
        \foreach \x in {-4,...,4}
            \foreach \y in {-4,...,0}
                \draw [-] (\x0) edge [black!50] (0\y);
        \foreach \x in {-4,...,4}
            \foreach \y in {1,...,2}
                \draw [-] (\x0) edge [thick] (0\y);
        \draw [rounded corners=2pt,fill=white] (-0.125*\l,-1.125*\l) rectangle (0.125*\l,1.125*\l);
        \draw [rounded corners=2pt,fill=white] (-1.125*\l,-0.125*\l) rectangle (-0.125*\l,0.125*\l);
        \draw [rounded corners=2pt,fill=white] (0.125*\l,-0.625*\l) rectangle (1.125*\l,-0.375*\l);
        \begin{scope}[every node/.style={}]
            \node at (0.25*\l,\l) { $f$ };
            \node at (0.25*\l,0.5*\l) { $c$ };
            \node at (-\l,0.25*\l) { $n$ };
        \end{scope}
        \foreach \y in {3,...,4}
            \node[pattern=north west lines] at (0,0.25*\y*\l) {};
        \foreach \y in {1,...,2}
            \node[pattern=north east lines] at (0,0.25*\y*\l) {};
        \foreach \y in {-2,...,0}
            \node[fill=black!25] at (0,0.25*\y*\l) {};
        \foreach \x in {-4,...,-1}
            \node[fill=black!25] at (0.25*\x*\l,0) {};
        \foreach \x in {1,...,4}
            \node[fill=black!25] at (0.25*\x*\l,-0.5*\l) {};
        \foreach \y in {-4,...,-1}
            \node[draw,circle] at (0,0.25*\y*\l) {};
        \end{tikzpicture}
    }     
    \caption{Separator sparsification process. The first picture depicts the usual ND separator.
    The grey boxes have been eliminated. The second picture shows the different interfaces to be sparsified.
    The third and fourth picture focus on a given interface defined by $p$ and connected to $n$.
    On the fourth picture, we have transformed $p$ into $f$ and $c$ through a change of basis (not shown here) and up to a small $\OO{\varepsilon}$ error.
    $f$ is now disconnected from $n$ and can be eliminated, without introducing any fill-in. Dark edges are edges updated by the sparsification.}
    \label{fig:separator_sparsification_process}
\end{figure}
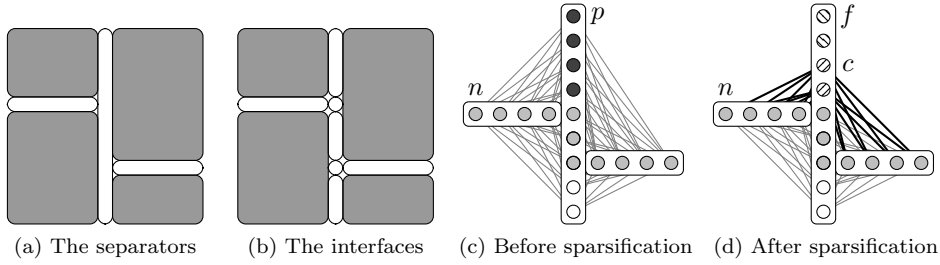

To understand the transformation, it is best to switch to a different block structure for the matrix. 
Specifically, consider the matrix
\[ A^{(\ell)} = \begin{bmatrix}
        \hat{A}^{(\ell)}_{11} & \dots     & \hat{A}^{(\ell)}_{1M}   \\
        \vdots                & \ddots    & \vdots                \\
        \hat{A}^{(\ell)}_{M1} & \dots     & \hat{A}^{(\ell)}_{MM}
    \end{bmatrix}
\]
This matrix is equivalent to $A^{(\ell)}$, the use of the {\it hat} accent symbol is only to emphasize the different block structure.  
Each $\hat{A}^{(\ell)}_{ii}$ corresponds to the sub-matrix associated with an \emph{interface} (instead of a separator).
Specifically, an interface is defined as a \emph{subset} of a separator for which the left and right neighbors correspond to a given pair of separators at level $\ell$.
These $M$ interfaces are subsets of all the separators associated with non-eliminated levels (i.e., levels $k$ with $k \le \ell$).  
Notice that many of the $\hat{A}^{(\ell)}_{ij}$ are zero as only neighboring interfaces are coupled.  
\autoref{fig:separator_sparsification_process} shows the distinction between ND separators (\autoref{sfig:separator_sparsification_process_a}) and interfaces (\autoref{sfig:separator_sparsification_process_b}).
The top-level (root) separator has been cut into 5 pieces, each associated to a pair of left and right neighbors.

\autoref{sfig:separator_sparsification_process_c} and \autoref{sfig:separator_sparsification_process_d} illustrates a typical situation 
and the effect of the sparsification.
Let $p$ be a subset of a ND separator (in dark grey) at the interface between two interiors and $n$ be all its neighbors (in light grey). 
The remaining nodes are disconnected from $p$ and can be ignored for the purpose of this discussion.
In this situation, the leaf-level interiors (dark grey clusters on \autoref{sfig:separator_sparsification_process_a} 
and \autoref{sfig:separator_sparsification_process_b}) have been eliminated
and only higher-levels separators are left.
The greyed edges represent connections between degrees of freedom. Notice that edges never cross separators.
 
Let $\hat A_{pn}$ \rev{denote} all the edges from $p$ to $n$. Assuming $\hat A_{pp} = I$ (this is not a restriction, see \autoref{sec:scaling}), 
we can then consider this sub-matrix of $A^{(\ell)}$
\[ \begin{bmatrix}
    I           & \hat A_{pn} \\
    \hat A_{np} & \hat A_{nn}    
\end{bmatrix} \]
Then, compute a low-rank approximation
\[ \hat A_{pn} = \underbrace{\begin{bmatrix} Q_{pf} & Q_{pc} \end{bmatrix}}_{Q} \begin{bmatrix} W_{fn} \\ W_{cn} \end{bmatrix}
\text{ with } \|W_{fn}\| = \OO{\varepsilon} \]
and verify that
\[ \begin{bmatrix} 
        Q^\top  &       \\ 
                & I 
    \end{bmatrix} 
    \begin{bmatrix} 
        I_{pp}      & \hat A_{pn} \\ 
        \hat A_{np} & \hat A_{nn} 
    \end{bmatrix}
    \begin{bmatrix} 
        Q   &   \\ 
            & I 
    \end{bmatrix} 
    = 
    \begin{bmatrix} 
        I_{ff}              &                       & \OO{\varepsilon} \\ 
                            & I_{cc}                & \rev{W_{cn}} \\ 
        \OO{\varepsilon}    & \rev{W_{cn}^\top}     & \hat A_{nn} 
    \end{bmatrix} 
\]
The matrix $Q$ is a change of variables, transforming $p$ into $f$ (``fine'') and $c$ (``coarse'' --- following AMG's 
terminology, \cite{stuben2001review}).
If we then ignore the $\OO{\varepsilon}$ edges, we have effectively decoupled the $f$ variables from the rest, i.e., 
we have eliminated $f$.
Notice that this didn't create any fill-in: $\hat A_{nn}$ is unchanged.
As a result, we effectively \emph{decreased the size of the separator}, without altering the nested dissection ordering.
In the following, we will drop the hat notation, as it should be clear from the context whether we are referring to separators or interfaces.

The algorithm then \rev{alternates} between classical ``interiors'' elimination (using a block Cholesky factorization) and ``interfaces'' 
sparsification as explained above. 
\autoref{algo:high_level_algo} presents a high-level version of the algorithm. We name the algorithm \algo{}, referring 
to ``sparsified Nested Dissection''.

\begin{algorithm}
    \begin{algorithmic}
        \REQUIRE{Sparse matrix $A$, SPD, Maximum level $L$}
        \STATE Compute a ND ordering for $A$, infer interiors, separators and interfaces (see \autoref{sec:ordering_and_clustering})
        \FORALL{$\ell=L,\dots,1$}
            \FORALL{$\mathcal{I}$ interior}
                \STATE Eliminate $\mathcal{I}$ (see \autoref{sec:elimination})
            \ENDFOR
            \FORALL{$\mathcal{B}$ interface between \revrev{interiors}}
                \STATE Sparsify $\mathcal{B}$ (see \autoref{sec:scaling} and \autoref{sec:sparsification})
            \ENDFOR
        \ENDFOR
    \end{algorithmic}
    \caption{High-level description of the \algo{} algorithm}
    \label{algo:high_level_algo}
\end{algorithm}

The subsequent sections explain in detail the ordering \& clustering (i.e., how we define the ``interfaces''), 
the elimination and sparsification.

\subsection{Ordering and Clustering}
\label{sec:ordering_and_clustering}

In addition to ordering, an appropriate clustering of the dofs has to be performed to define the various interfaces between interiors.
That is, a simple ND ordering, by itself, does not give any indication about what the interfaces between different interiors are.
To see this, consider \autoref{fig:classical_nested_dissection} (bottom row). This figure illustrates a classical ND 
ordering process. At every step, interiors are further separated by computing vertex separators. 
However, there is no clear way to define interfaces between interiors as shown on \autoref{fig:separator_sparsification_process} for instance.
This cannot readily be calculated or even properly defined with a ``usual'' ND ordering. 

To solve this issue, we have to keep track of the boundary of each interior during the ordering process.
\rev{We do so by modifying the usual ND algorithm. 
In the classical algorithm, a set of vertices is separated by a vertex-separator, and the algorithm then recurses on the ``left'' and ``right'' clusters (interiors).
We modify this by separating an interior \emph{and} its boundary using vertex separators. This \revrev{lets} keep track of the interfaces.
\autoref{fig:clustering_ordering} shows the high-level idea. 
For every interior $\mathcal{I}$ we keep track \revrev{of} its boundary $\mathcal{B}$ and we then separate their union $\mathcal{I} \cup \mathcal{B}$. 
}

\begin{figure}
    \centering
    \subfloat[][An interior and its boundary before the modified ND step.]{\begin{tikzpicture}
        \def \l{0.8};
        \draw (-2*\l,0) [white] rectangle (5.5*\l,3.5*\l);
        \draw (0,0) rectangle (3*\l,3*\l);
        \draw (0,0) rectangle (3.5*\l,3.5*\l);                
        \node at (1.5*\l,1.5*\l){\small $\mathcal{I}$};
        \node at (1.7*\l,3.25*\l){\small $\mathcal{B}$};
    \end{tikzpicture}} \;\;
    \subfloat[][The resulting left and right interiors and their boundaries.]{\begin{tikzpicture}
        \def \l{0.8};
        \draw (-2*\l,0) [white] rectangle (5.5*\l,3.5*\l);
        \draw [pattern=north east lines] (0,0) rectangle (3.5*\l,3.5*\l);
        \draw [fill=white] (0,0) rectangle (3*\l,3*\l);
        \draw [fill=white] (3*\l,1*\l) rectangle (3.5*\l,1.5*\l);
        \draw [fill=white] (3*\l,0) rectangle (3.5*\l,1*\l);
        \draw (0,1*\l) rectangle (3.5*\l,1.5*\l);
        \draw [pattern=north west lines] (0,1*\l) rectangle (3*\l,1.5*\l);
        \draw [pattern=north east lines] (0,1*\l) rectangle (3*\l,1.5*\l);
        \draw [pattern=north west lines] (3*\l,0) rectangle (3.5*\l,1*\l);
        \node at (1.5*\l,0.5*\l){\small $\mathcal{L} \cap \mathcal{I}$};
        \node at (1.5*\l,2.2*\l){\small $\mathcal{R} \cap \mathcal{I}$};
        \node at (-0.9*\l,1.3*\l){\small $\mathcal{M} \cap \mathcal{I}$};
        \node at (4.2*\l,2.5*\l){\small $\mathcal{R} \cap \mathcal{B}$};
        \node at (4.2*\l,0.5*\l){\small $\mathcal{L} \cap \mathcal{B}$};
    \end{tikzpicture}}
    \caption{The clustering \& ordering building block. On the left, an initial interior $\mathcal{I}$ and its boundary $\mathcal{B}$. 
    \rev{We then compute a vertex separator separating $\mathcal{I} \cup \mathcal{B}$ into left $\mathcal{L}$, right $\mathcal{R}$ and separator $\mathcal{M}$.}
    On the right, the resulting separated interiors and their boundaries, as well as the actual ND separator.}
    \label{fig:clustering_ordering}
\end{figure}
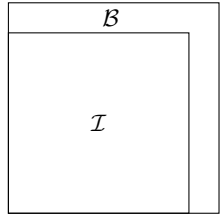
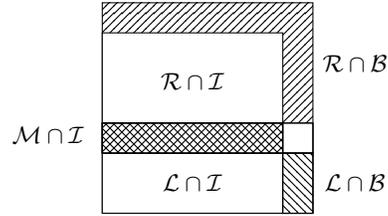

\rev{In practice, each node in the graph keeps track of its ``left'' and ``right'' neighboring separators, in addition to keeping track of the separator it belongs to.
We encode this by associating to each vertex $v$ a 3-tuple $(S, L, R)$.
$S$ is the usual ND separator $(\ell,k)$ where $\ell$ is its level and $1 \leq k \leq 2^{\ell-1}$.
$L$ and $R$ are the ND separators of $v$'s left and right neighbors, respectively.
\autoref{algo:ordering_clustering} formalizes this idea. Notice how the only building block is a vertex-separator routine, as available in Metis \cite{karypis1998fast}.
}
\begin{algorithm}
    \begin{algorithmic}
        \REQUIRE{$V$, vertices, $E$, edges, $L$ levels}
        \STATE \emph{\% Initialize the top separator (everyone), left and right neighbors (undefined)}
        \STATE $C[v] = (S:(1,1),L:\texttt{none},R:\texttt{none})$ for all $v \in V$
        \FORALL{$\ell = 1,\dots,L-1$}
            \FORALL{$k = 1,\dots,2^{\ell-1}$}
                \STATE \% \emph{Find interior to separate $\mathcal{I}$ and its boundary $\mathcal{B}$}
                \STATE $\mathcal{I} = \{v \in V: C[v]_S = (\ell,k)\}$
                \STATE $\mathcal{B} = \{v \in V: C[v]_L = (\ell,k) \text{ or } C[v]_R = (\ell,k)\}$
                \STATE \% \emph{Find vertex separator $\mathcal{M}$, left and right interiors $\mathcal{L}$ and $\mathcal{R}$}
                \STATE $(\mathcal{L},\mathcal{M},\mathcal{R}) = \texttt{vertex-separator}(\mathcal{I} \cup \mathcal{B})$
                \STATE \% \emph{Update separator, left and right interiors}
                \STATE $C[v]_S = (\ell,k)$ for all $v \in \mathcal{M} \backslash \mathcal{B}$
                \STATE $C[v]_S = (\ell+1,2k-1)$ for all $v \in \mathcal{L} \backslash \mathcal{B}$
                \STATE $C[v]_S = (\ell+1,2k)$ for all $v \in \mathcal{R} \backslash \mathcal{B}$
                \STATE \% \emph{Update neighbors of separator}
                \FORALL{$v \in \mathcal{M} \cap \mathcal{I}$}                    
                    \STATE $C[v]_L = (\ell+1,2k-1)$
                    \STATE $C[v]_R = (\ell+1,2k)$
                \ENDFOR 
                \STATE \% \emph{Update neighbors of left and right boundaries}
                \FORALL{$v \in \mathcal{L} \cap \mathcal{B}$}
                    \IF{$C[v]_L = (\ell,k)$}
                        \STATE $C[v]_L = (\ell+1,2k-1)$
                    \ELSE
                        \STATE $C[v]_R = (\ell+1,2k-1)$
                    \ENDIF
                \ENDFOR
                \FORALL{$v \in \mathcal{R} \cap \mathcal{B}$}
                    \IF{$C[v]_L = (\ell,k)$}
                        \STATE $C[v]_L = (\ell+1,2k)$
                    \ELSE
                        \STATE $C[v]_R = (\ell+1,2k)$
                    \ENDIF
                \ENDFOR                             
            \ENDFOR
        \ENDFOR
        \RETURN $C$
    \end{algorithmic}
    \caption{
        \rev{Ordering and clustering algorithm. 
        The algorithm is similar to a classical ND algorithm, except that it keeps track of the interfaces between interiors/separators, and recursively dissects interiors and their interfaces.
        ND separators are encoded as $(\ell,k)$ where $\ell$ is the level and $1 \leq k \leq 2^{\ell-1}$.}
        }
    \label{algo:ordering_clustering}
\end{algorithm}

\rev{\autoref{algo:ordering_clustering} returns $C$ that gives for each vertex $v$ in the graph its ND separator, $C[v]_S$, as well as a
tuple $(C[v]_L,C[v]_R)$ indicating its left and right neighboring interiors.
We then cluster together vertices $v$ with the same $C[v]$.
This algorithm is naturally recursive and defines, for each separator, a tree of clusters.}

\autoref{fig:ordering_clustering_process} illustrates the effect of \autoref{algo:ordering_clustering}
On the top row, we illustrate the separators at every step ($\ell$) of the algorithm.
The important distinctions with \autoref{fig:classical_nested_dissection} is that the computed vertex-separators overlap 
with the boundaries to keep track of interfaces, and each separator is further divided into clusters.
On the middle row, we illustrate the actual clusters at each level and how the ND separators are broken into pieces.
Separators at each level are depicted in a different color. 
Each separator is associated a hierarchy of clusters. 
The bottom row shows such a hierarchy within each separator and how those have to be merged when going from a lower to higher level.

\begin{figure}
    \centering
    \subfloat[][$\ell = 0$]{\begin{tikzpicture}
        \def \l{1.5};
        \draw [rounded corners=3pt] (-\l,-\l) rectangle (\l,\l);
    \end{tikzpicture}} \,
    \subfloat[][$\ell = 1$]{\begin{tikzpicture}
        \def \l{1.5};
        \draw [rounded corners=3pt] (-\l,-\l) rectangle (\l,\l);
        \draw [rounded corners=3pt] (-0.075*\l,-\l) rectangle (0.075*\l,\l);
    \end{tikzpicture}} \,
    \subfloat[][$\ell = 2$]{\begin{tikzpicture}
        \def \l{1.5};
        \draw [rounded corners=3pt] (-\l,-\l) rectangle (\l,\l);
        \draw [rounded corners=3pt] (-0.075*\l,-\l) rectangle (0.075*\l,\l);
        \draw [rounded corners=3pt] (-\l,0.1*\l) rectangle (0.075*\l,0.25*\l);
        \draw [rounded corners=3pt] (-0.075*\l,-0.15*\l) rectangle (\l,-0.3*\l);
    \end{tikzpicture}} \,
    \subfloat[][$\ell = 3$]{\begin{tikzpicture}
        \def \l{1.5};
        \draw [rounded corners=3pt] (-\l,-\l) rectangle (\l,\l);
        \draw [rounded corners=3pt] (-0.075*\l,-\l) rectangle (0.075*\l,\l);
        \draw [rounded corners=3pt] (-\l,0.1*\l) rectangle (0.075*\l,0.25*\l);
        \draw [rounded corners=3pt] (-0.075*\l,-0.15*\l) rectangle (\l,-0.3*\l);
        \draw [rounded corners=3pt] (-\l,0.5*\l) rectangle (0.075*\l,0.65*\l);
        \draw [rounded corners=3pt] (0.5*\l,-0.3*\l) rectangle (0.65*\l,\l);
        \draw [rounded corners=3pt] (-\l,-0.55*\l) rectangle (0.075*\l,-0.7*\l);
        \draw [rounded corners=3pt] (-0.075*\l,-0.55*\l) rectangle (\l,-0.7*\l);
    \end{tikzpicture}} \\
    \subfloat[][$\ell = 0$]{\begin{tikzpicture}
        \def \l{1.5};
        \draw [rounded corners=3pt] (-\l,-\l) rectangle (\l,\l);
    \end{tikzpicture}} \,
    \subfloat[][$\ell = 1$]{\begin{tikzpicture}
        \def \l{1.5};
        \draw [rounded corners=3pt] (-\l,-\l) rectangle (\l,\l);
        \draw [rounded corners=3pt,fill=black!75] (-0.075*\l,-\l) rectangle (0.075*\l,\l);
    \end{tikzpicture}} \,
    \subfloat[][$\ell = 2$]{\begin{tikzpicture}
        \def \l{1.5};
        \draw [rounded corners=3pt] (-\l,-\l) rectangle (\l,\l);
        \draw [rounded corners=3pt,fill=black!50] (-0.075*\l,-\l) rectangle (0.075*\l,-0.3*\l);
        \draw [rounded corners=3pt,fill=black!50] (-0.075*\l,-0.3*\l) rectangle (0.075*\l,-0.15*\l);
        \draw [rounded corners=3pt,fill=black!50] (-0.075*\l,-0.15*\l) rectangle (0.075*\l,0.1*\l);
        \draw [rounded corners=3pt,fill=black!50] (-0.075*\l,0.1*\l) rectangle (0.075*\l,0.25*\l);
        \draw [rounded corners=3pt,fill=black!50] (-0.075*\l,0.25*\l) rectangle (0.075*\l,\l);
        \draw [rounded corners=3pt,pattern=north west lines] (-\l,0.1*\l) rectangle (-0.075*\l,0.25*\l);
        \draw [rounded corners=3pt,pattern=north west lines] (0.075*\l,-0.15*\l) rectangle (\l,-0.3*\l);
    \end{tikzpicture}} \;
    \subfloat[][$\ell = 3$]{\begin{tikzpicture}
        \def \l{1.5};
        \draw [rounded corners=3pt] (-\l,-\l) rectangle (\l,\l);
        \draw [rounded corners=3pt,fill=black!20] (-0.075*\l,-\l) rectangle (0.075*\l,-0.70*\l);
        \draw [rounded corners=3pt,fill=black!20] (-0.075*\l,-0.7*\l) rectangle (0.075*\l,-0.55*\l);
        \draw [rounded corners=3pt,fill=black!20] (-0.075*\l,-0.55*\l) rectangle (0.075*\l,-0.3*\l);
        \draw [rounded corners=3pt,fill=black!20] (-0.075*\l,-0.3*\l) rectangle (0.075*\l,-0.15*\l);
        \draw [rounded corners=3pt,fill=black!20] (-0.075*\l,-0.15*\l) rectangle (0.075*\l,0.1*\l);
        \draw [rounded corners=3pt,fill=black!20] (-0.075*\l,0.1*\l) rectangle (0.075*\l,0.25*\l);
        \draw [rounded corners=3pt,fill=black!20] (-0.075*\l,0.25*\l) rectangle (0.075*\l,0.5*\l);
        \draw [rounded corners=3pt,fill=black!20] (-0.075*\l,0.5*\l) rectangle (0.075*\l,0.65*\l);
        \draw [rounded corners=3pt,fill=black!20] (-0.075*\l,0.65*\l) rectangle (0.075*\l,\l);
        \draw [rounded corners=3pt,pattern=grid] (-\l,0.1*\l) rectangle (-0.075*\l,0.25*\l);
        \draw [rounded corners=3pt,pattern=grid] (0.075*\l,-0.15*\l) rectangle (0.5*\l,-0.3*\l);
        \draw [rounded corners=3pt,pattern=grid] (0.5*\l,-0.15*\l) rectangle (0.65*\l,-0.3*\l);
        \draw [rounded corners=3pt,pattern=grid] (0.65*\l,-0.15*\l) rectangle (\l,-0.3*\l);
        \draw [rounded corners=3pt] (-\l,0.5*\l) rectangle (-0.075*\l,0.65*\l);
        \draw [rounded corners=3pt] (0.5*\l,-0.15*\l) rectangle (0.65*\l,\l);
        \draw [rounded corners=3pt] (-\l,-0.55*\l) rectangle (-0.075*\l,-0.7*\l);
        \draw [rounded corners=3pt] (0.075*\l,-0.55*\l) rectangle (\l,-0.7*\l);
    \end{tikzpicture}} \\
    \subfloat[$\ell=1$ separator clustering hierarchy]{\begin{tikzpicture}[every node/.style={draw,circle}]
        \node[fill=black!75] (a) at (0 ,0) {};
        \node[fill=black!50] (b) at (-1.5, -0.5) {};
        \node[fill=black!50] (c) at (-0.5, -0.5) {};
        \node[fill=black!50] (d) at (0, -0.5) {};
        \node[fill=black!50] (e) at (0.5, -0.5) {};
        \node[fill=black!50] (f) at (1.5, -0.5) {};
        \node[fill=black!20] (g) at (-2, -1) {};
        \node[fill=black!20] (h) at (-1.5, -1) {};
        \node[fill=black!20] (i) at (-1, -1) {};
        \node[fill=black!20] (j) at (-0.5, -1) {};
        \node[fill=black!20] (k) at (0, -1) {};
        \node[fill=black!20] (l) at (0.5, -1) {};
        \node[fill=black!20] (m) at (1, -1) {};
        \node[fill=black!20] (n) at (1.5, -1) {};
        \node[fill=black!20] (o) at (2, -1) {};
        \draw (a) edge (b);
        \draw (a) edge (c);
        \draw (a) edge (d);
        \draw (a) edge (e);
        \draw (a) edge (f);
        \draw (b) edge (g);
        \draw (b) edge (h);
        \draw (b) edge (i);
        \draw (c) edge (j);
        \draw (d) edge (k);
        \draw (e) edge (l);
        \draw (f) edge (m);
        \draw (f) edge (n);
        \draw (f) edge (o);
    \end{tikzpicture}} \;
    \subfloat[$\ell=2$ separators clustering hierarchy]{
    \begin{tikzpicture}
        \draw [draw=white] (0,0) rectangle (0.3,1);
    \end{tikzpicture}
    \begin{tikzpicture}[every node/.style={draw,circle,fill=black!50}]
        \node[pattern=north west lines] (a) at (0 ,0) {};
        \node[pattern=grid] (b) at (0,-0.5) {};
        \draw (a) edge (b);
    \end{tikzpicture} \;
    \begin{tikzpicture}[every node/.style={draw,circle,fill=black!50}]
        \node[pattern=north west lines] (a) at (0 ,0) {};
        \node[pattern=grid] (b) at (-1,-0.5) {};
        \node[pattern=grid] (c) at (0,-0.5) {};
        \node[pattern=grid] (d) at (1,-0.5) {};
        \draw (a) edge (b);
        \draw (a) edge (c);
        \draw (a) edge (d);
    \end{tikzpicture}
    \begin{tikzpicture}
        \draw [draw=white] (0,0) rectangle (0.3,1);
    \end{tikzpicture}
    } \quad
    \subfloat[$\ell=3$ separators clustering hierarchy]{
    \begin{tikzpicture}
        \draw [draw=white] (0,0) rectangle (0.5,1);
    \end{tikzpicture}
    \begin{tikzpicture}[every node/.style={draw,circle}]
        \node at (0,0) {};
    \end{tikzpicture}\;
    \begin{tikzpicture}[every node/.style={draw,circle}]
        \node at (0,0) {};
    \end{tikzpicture}\;
    \begin{tikzpicture}[every node/.style={draw,circle}]
        \node at (0,0) {};
    \end{tikzpicture}\;
    \begin{tikzpicture}[every node/.style={draw,circle}]
        \node at (0,0) {};
    \end{tikzpicture}
    \begin{tikzpicture}
        \draw [draw=white] (0,0) rectangle (0.5,1);
    \end{tikzpicture}
    }
    \caption{A modified ND ordering \& clustering (\autoref{algo:ordering_clustering}). The top row indicates the separators computed at each 
step by separating interiors \& boundaries. The middle row illustrates the clustering of dofs in each separator creating 
the interfaces between interiors. The bottom row shows the clusters hierarchy within each ND separator.}
    \label{fig:ordering_clustering_process}
\end{figure}
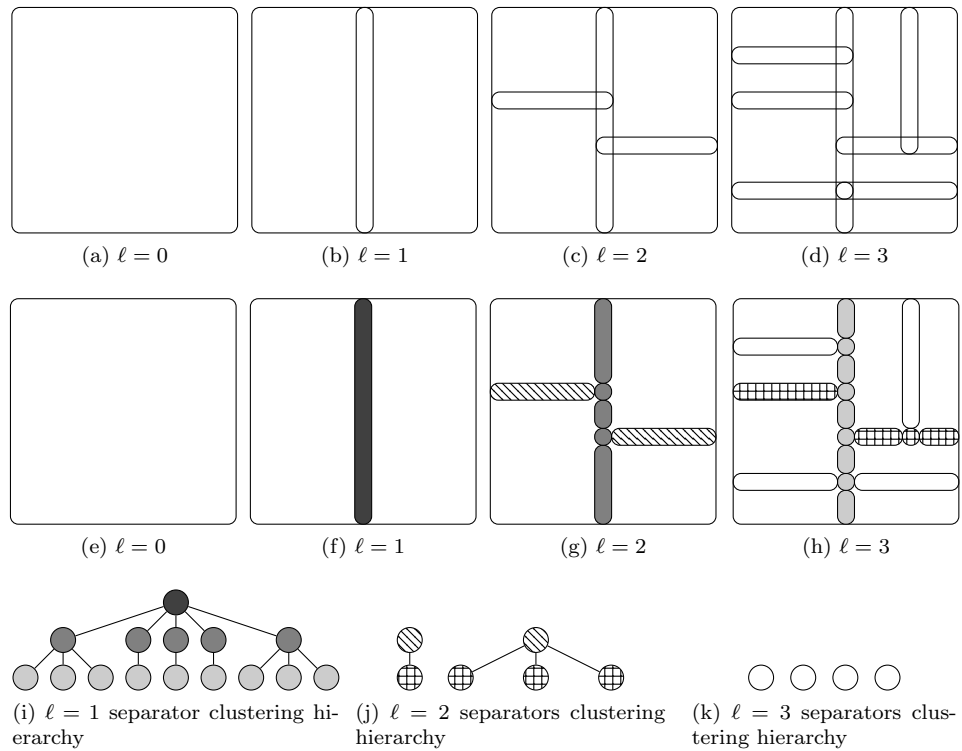

\rev{In practice (see \autoref{sec:numerical_experiments}), we implement this algorithm in two ways. 
If geometry information is available, the \texttt{vertex-separator} subroutine of \autoref{algo:ordering_clustering} is implemented using a recursive coordinate bisection. 
The subgraph is partitioned into two equal parts along the dimension with the largest span, and the nodes in the first part adjacent to the second form the middle separator.
If no geometry information is available, we use the \texttt{nodeND} routine of Metis \cite{karypis1998fast}.}

\subsection{Separators Elimination using Block Cholesky} \label{sec:elimination}

Now that the matrix has been ordered and that dofs have been grouped into clusters defining various interfaces, the next step is to eliminate
the interiors or separators at a given level $\ell$ of the ND tree, as in a usual direct solver (see \autoref{algo:high_level_algo}).
This section describes this elimination step, which is simply a standard block Cholesky reinterpreted with our notation.
Consider $A$ into the ``block-arrowhead'' form following the ND ordering
\[ A = \begin{bmatrix} A_{ss} &        & A_{sn} \\
                              & A_{ww} & A_{wn} \\
                       A_{ns} & A_{nw} & A_{nn} \end{bmatrix} \]
We indicate the separator or interior of interest by $s$, its neighbors by $n$ 
and all disconnected nodes by $w$. By symmetry, $A_{ab} = A_{ba}^\top$.

Let $L_s L_s^\top = A_{ss}$ the Cholesky factorization of $A_{ss}$. Then, define
\[ E_s = 
    \begin{bmatrix} L_s^{-1}             &   & \\
                                         & I & \\
                    - A_{ns} A_{ss}^{-1} & & I \end{bmatrix}
\] 
Then, applying $E_s$ on the left and right of $A$ leads to
\[ E_s A E_s^\top =
    \begin{bmatrix} I & & \\
                      & A_{ww} & A_{wn} \\
                      & A_{nw} & A_{nn} - A_{ns} A_{ss}^{-1} A_{sn} \end{bmatrix}
    =
\begin{bmatrix} I & & \\ & A_{ww} & A_{wn} \\ & A_{nw} & B_{nn} \end{bmatrix}
\]
We notice that this may introduces (potentially many) new $n_i$--$n_j$ edges not present before, a fill-in.
However, there was no modification involving $w$. This is key in the ND ordering: there are no edges $s$--$w$, so no fill-in 
outside the neighbors.


\subsection{Interfaces Scaling} \label{sec:scaling}
Once that the separators or interiors at a given level have been eliminated, the algorithm goes through each interface and sparsifies it.
However, a critical step before this is the proper scaling of each of those clusters. The goal is to scale (what is left of) $A$
such that each diagonal block corresponding to a given interface is the identity. 
This provides theoretical guarantees (\autoref{sec:theoretical_results}) and significantly improves the accuracy of
the preconditioner (\autoref{sec:numerical_experiments}).

Consider the matrix
\[ A = \begin{bmatrix} 
        A_{pp}  & A_{pn} \\
        A_{np}  & A_{nn} 
        \end{bmatrix} 
\]
We define the block-scaling operation over $p$ as
\[ S_p = 
    \begin{bmatrix} 
        L_p^{-1}    & \\
                    & I 
    \end{bmatrix}
\] 
The result is
\[ S_p A S_p^\top = 
    \begin{bmatrix}
                I       & L_p^{-1} A_{pn}   \\
    A_{np} L_s^{-\top}  & A_{nn} 
    \end{bmatrix}
                = 
    \begin{bmatrix}
                I       & C_{pn} \\
                C_{np}  & A_{nn} 
    \end{bmatrix} 
\]

\subsection{Interface Sparsification using Low-Rank Approximations} \label{sec:sparsification}

Now that interiors have been eliminated and each interface scaled, the final step is the sparsification.
At this stage, the algorithm will go through each interface, $p$, and sparsify it, using low-rank approximations. Consider again
\[ A =
    \begin{bmatrix}
        A_{pp}  & A_{pn} \\
        A_{np}  & A_{nn}
    \end{bmatrix}
\]

\subsubsection{Using orthogonal transformations} \label{sec:sparsification_orthogonal}

Let us assume $A_{pp} = I$. This is not a loss of generality, as it can always be obtained by scaling $p$, as described in the previous section.
Let us also assume that $A_{pn}$ can be well approximated by a low-rank matrix, i.e.,
\[ A_{pn} = Q_{pc} W_{cn} + Q_{pf} W_{fn},\quad \|W_{fn}\|_2 =  \OO{\varepsilon} \]
where $Q_{pc}$ is a thin orthogonal matrix and $Q_{pf}$ its complement. 
This can be computed using a rank-revealing QR (RRQR) 
or a singular value decomposition (SVD) \cite{golub2013matrix, chan1987rank, gu1996efficient}. 
We use the letters $c$ to denote the ``coarse'' (also known as ``skeleton'' or ``relevant'', \cite{ho2016hierarchical}) 
dofs, and $f$ the ``fine'' (``redundant'' or ``irrelevant'') dofs.
Let $Q_{pp}$ be a square orthogonal matrix built as $Q_{pp} = \begin{bmatrix} Q_{pf} & Q_{pc}\end{bmatrix}$. This implies
\[ Q_{pc}^\top A_{pn} = W_{cn},\quad Q_{pf}^\top A_{pn} = W_{fn} =\OO{\varepsilon} \]
Then, define
\begin{equation} \label{eq:orthogonal}
   Q_p = 
    \begin{bmatrix} Q_{pp}  & \\
                            & I 
    \end{bmatrix} 
\end{equation}
We see that
\[ Q_p^\top A Q_p = 
    \begin{bmatrix} 
            I           &               & W_{fn}       \\ 
                        & I             & W_{cn}       \\
            W_{fn}^\top & W_{cn}^\top   & A_{nn} 
    \end{bmatrix}
=   \begin{bmatrix}
            I                   &               & \OO{\varepsilon}       \\ 
                                & I             & W_{cn}       \\
            \OO{\varepsilon}    & W_{cn}^\top   & A_{nn} 
    \end{bmatrix} 
\]

After the orthogonal transformation, $f$ only has very ``weak'' connections to $n$.
If we ignore the $\OO{\varepsilon}$ term, this is the same as dropping the $n$--$f$ edge.
This effectively means $f$ has been eliminated.

However, note that this did \emph{not} introduce any new edge with any of the neighbors of $p$. 
This is the key difference with a ``regular'' elimination as described previously: we can eliminate part of a cluster, 
here $f$, \emph{without forming new edges between its neighbors}. The $n$--$n$ edge is unaffected by this operation (i.e., there is
no fill-in). 
A regular elimination, on the other hand, would have changed the edges $n$--$n$.

\subsubsection{Variant using Interpolative Transformations}
\label{sec:sparsification_interpolative}
The previous section details the sparsification process using orthogonal transformations.
However, this can also be done using other transformations. In this section we explain one variant using interpolative
factorization, which was the original idea in \cite{ho2016hierarchical}.

Assume we can \emph{partition} $p = c \cup f$ (so in this case $c$ and $f$ are subsets of $p$) such that
\[ A_{n f} = A_{n c} T_{cf} + \OO{\varepsilon}. \]
This is often called ``interpolative decomposition''. It can be computed for instance using a rank-revealing QR (RRQR) 
factorization \cite{cheng2005compression} (note that the RRQR is computed over $A_{np}$ instead of $A_{pn}$ in 
\autoref{sec:sparsification_orthogonal}): computing a RRQR over $A_{np}$ leads to (with $P$ the permutation, and 
$R_{22} = \OO{\varepsilon}$)
\[ \begin{bmatrix} A_{nc} & A_{nf} \end{bmatrix} = A_{np} P = \begin{bmatrix} Q_1 & Q_2 \end{bmatrix} \begin{bmatrix} R_{11} & R_{12} \\ & R_{22} \end{bmatrix}\]
\[ \Rightarrow A_{nf} = \underbrace{Q_1 R_{11}}_{A_{nc}} \underbrace{R_{11}^{-1} R_{12}}_{T_{cf}} + \underbrace{Q_2 R_{22}}_{=\OO{\varepsilon}} \]
Note that this factorization can also be computed using randomized methods \cite{liberty2007randomized}.
This technique is referred to as ``interpolative'' because it is exact on $A_{nc}$: only $A_{nf}$ is approximated and $T_{cf}$ acts as an interpolation operator (i.e., as a set of Lagrange basis functions).

Now, consider
\[ T_{p} = 
    \begin{bmatrix} 
        I           &               & \\
        -T_{cf}     & I             & \\
                    &               & I 
    \end{bmatrix} 
\]
Notice how $T_p$ is a \rev{lower}-triangular matrix, while $Q_p$ in \autoref{eq:orthogonal} was orthogonal.
Both can be efficiently inverted; however, working with orthogonal matrices \rev{brings} stability guarantees 
(see \autoref{sec:theoretical_results}).
Then,
\[ T_p^{\top} A T_p = 
    \begin{bmatrix} 
        C_{f f}             & C_{f c}           & \OO{\varepsilon}               \\
        C_{c f}             & A_{c c}           & A_{c n}                       \\
        \OO{\varepsilon}    & A_{n c}           & A_{nn} 
    \end{bmatrix}  
\]
with 
\[ C_{ff} = A_{ff} - A_{fc}T_{cf} - T_{cf}^\top A_{cf} + T_{cf}^\top A_{cc} T_{cf},\; C_{cf} = A_{cf} - A_{cc} T_{cf},\; C_{fc} = C_{cf}^\top \]

The final result is the same as using orthogonal transformation. The differences are that
\begin{itemize}
    \item $A_{pp}$ is not required to be identity;
    \item $A_{cn}$ is simply a subset of $A_{pn}$.
\end{itemize}
However, as we will see later on, there is a significant accuracy loss when using this technique without block scaling 
as opposed to orthogonal transformations with block scaling. Furthermore, it does not guarantee that the approximation 
stays SPD.

\subsection{Clusters merge} \label{sec:merging}

Finally, once we have eliminated all separators at a given level, we need to merge the interfaces of every remaining ND separator.
Consider for instance \autoref{fig:ordering_clustering_process}. 
After having eliminated the leaf (level $\ell = 4$) and the level $\ell = 3$ separators, we need to merge the clusters in each separator. 
This is done following the cluster trees.
Merging children clusters $p_1, \dots, p_k$ into a parent cluster $p$ simply means concatenating their dofs:
\[ p = \begin{bmatrix} p_1 & p_2 & \dots & p_k \end{bmatrix}. \]
Then, all block rows and columns corresponding to $p_1, \dots, p_k$ get concatenated into $p$.

\subsection{Sparsified Nested Dissection}

Now that we have introduced all the required building blocks (block elimination, scaling and sparsification), we can
present the complete algorithm.
Given a matrix $A$, appropriately ordered and clustered, the algorithm simply consists of applying a sequence of 
eliminations $E_s$ (\autoref{sec:elimination}), scalings $S_p$ (\autoref{sec:scaling}) and sparsification $Q_p$
(\autoref{sec:sparsification}) (plus potentially some re-orderings and permutations to take care of the fine nodes $f$ and the merge), at each level $\ell$, effectively reducing $A$ to (approximately) $I$:
\[ M^\top A M \approx I \text{ with } M = \prod_{\ell=1}^L \left( 
    \prod_{s\in S_\ell} E_s^\top
    \prod_{p \in C_\ell} S_p^\top
    \prod_{p \in C_\ell} Q_p  
    \right) \]
In this expression, $S_\ell$ are all the ND separators at level $\ell$ and $C_\ell$ are all the clusters (interfaces) in the graph
right after level $\ell$ elimination.
Since $M$ is given as a product of elementary transformations, it can easily be inverted.
We refer to this algorithm as \algo{}, which stands for ``sparsified Nested Dissection''.
\autoref{algo:spand} presents the algorithm.

\begin{algorithm}
    \begin{algorithmic}
        \REQUIRE{$A \succ 0$; $L > 0$; $\varepsilon$}
        \STATE $M = []$ (empty list)
        \STATE Compute a $L$-levels modified ND ordering of $A$ using \autoref{algo:ordering_clustering}. Infer clusters hierarchy in each ND separator.
        \FORALL{$\ell = L,\dots,1$}
            \FORALL[Eliminate separators at level $\ell$]{$s$ separator at level $\ell$} 
                \STATE Eliminate $s$, get $E_s$ (\autoref{sec:elimination})
                \STATE Append $E_s$ to $M$
            \ENDFOR            
            \FORALL[Scale interface]{$p$ interfaces}
                \STATE Scale $p$, get $S_p$ (\autoref{sec:scaling})
                \STATE Append $S_p$ to $M$
            \ENDFOR
            \FORALL[Sparsify interfaces]{$p$ interface}
                \STATE Sparsify $p$ with accuracy $\varepsilon$, get $Q_p$ (\autoref{sec:sparsification})
                \STATE Append $Q_p$ to $M$
            \ENDFOR
            \FORALL[Merge clusters]{$s$ separator}
                \STATE Merge interfaces of $s$ one level following clusters hierarchy (\autoref{sec:merging})
            \ENDFOR
        \ENDFOR
        \RETURN $M = \prod_{\ell=1}^L \left( \prod_{s\in S_\ell} E_s^\top \prod_{p \in C_\ell} S_p^\top \prod_{p \in C_\ell} Q_p \right)$
         (such that $ M^\top A M \approx I$)
    \end{algorithmic}
    \caption{The \algo{} algorithm (\texttt{OrthS}).}
    \label{algo:spand}
\end{algorithm}

We illustrate the effect of all the $E_s$, $S_p$ and $Q_p^\top$ in $A$ (i.e., the trailing matrix) on 
\autoref{fig:spand-illustration}. The two top rows show the actual trailing matrix, while the two bottom rows
show the evolution of the matrix graph's clusters as the elimination and sparsification proceeds.

\begin{figure}\centering
\subfloat[][$A$]{\begin{tikzpicture}
    \node[inner sep=0pt] at (0,0){\includegraphics[clip,width=0.28\textwidth]{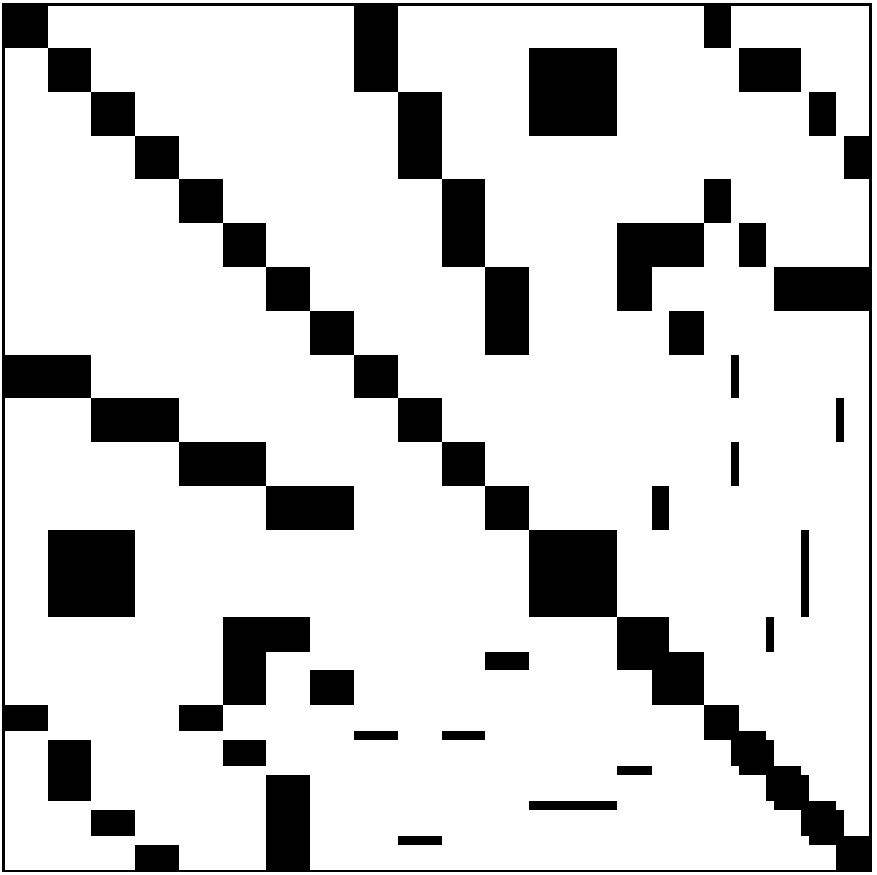}};
    \end{tikzpicture}}\,
    \subfloat[][After $E_1^\top$]{\begin{tikzpicture}
    \node[inner sep=0pt] at (0,0){\includegraphics[clip,width=0.28\textwidth]{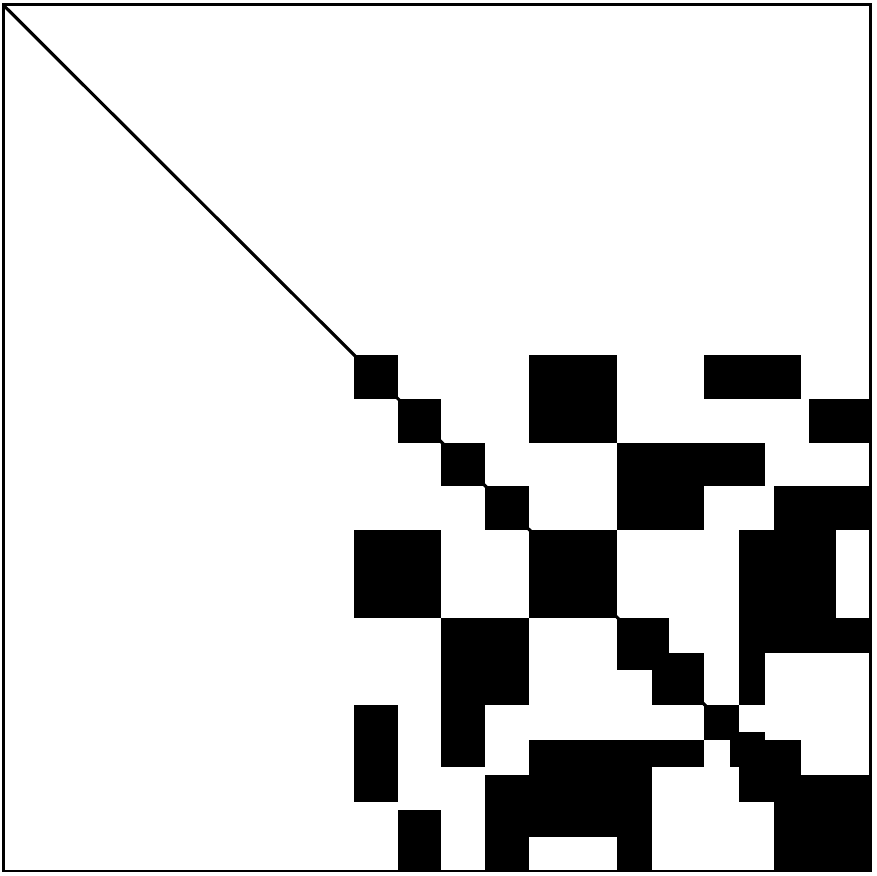}};
    \end{tikzpicture}}\,
    \subfloat[][After $S_1^\top Q_1$. Grey = $\OO{\varepsilon}$.]{\begin{tikzpicture}
    \node[inner sep=0pt] at (0,0){\includegraphics[clip,width=0.28\textwidth]{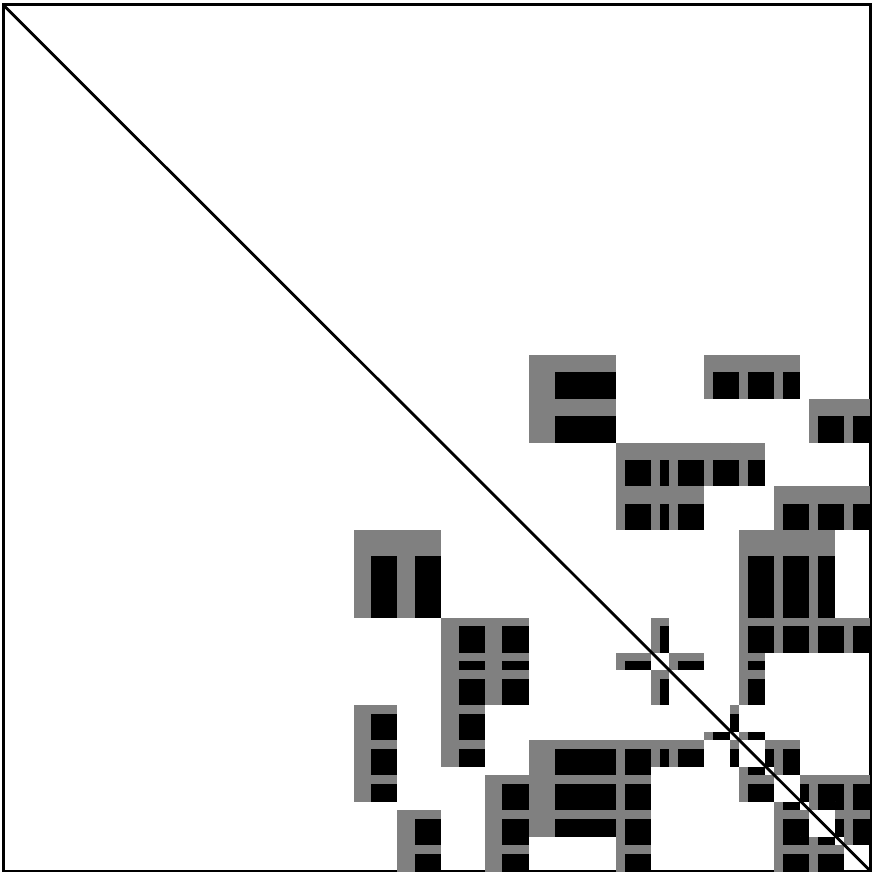}};
    \end{tikzpicture}}\\
    \subfloat[][After reordering, bringing $f$ in front.]{\begin{tikzpicture}
    \node[inner sep=0pt] at (0,0){\includegraphics[clip,width=0.28\textwidth]{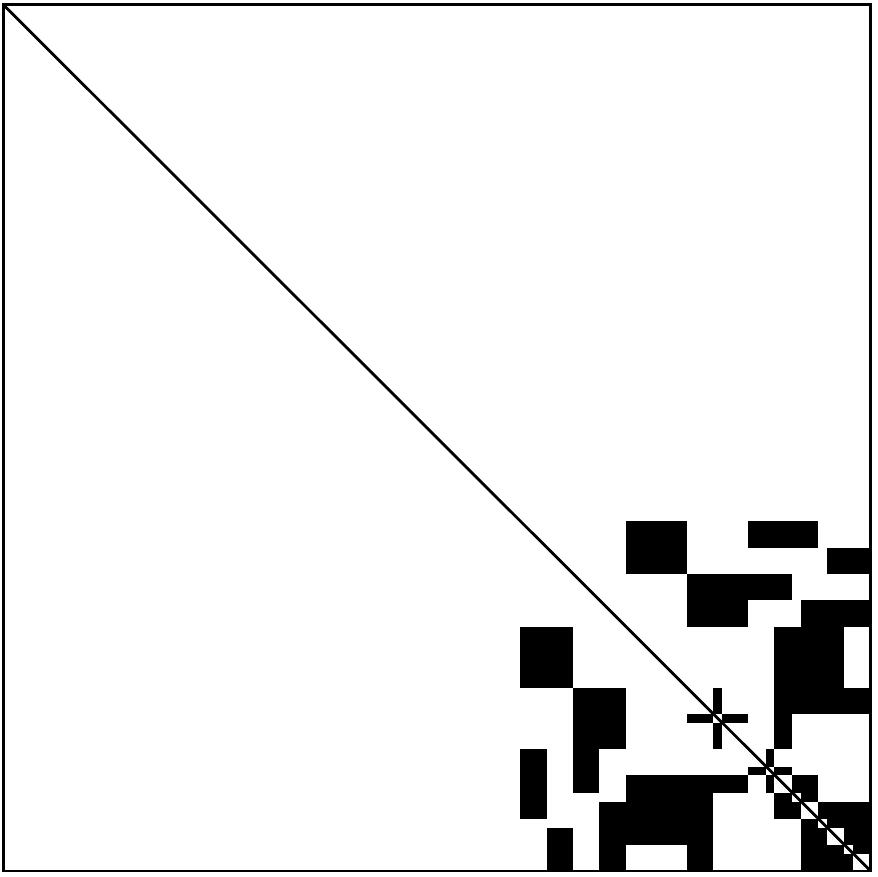}};
    \end{tikzpicture}}\,
    \subfloat[][After \rev{merge} \& $E_2^\top$]{\begin{tikzpicture}
    \node[inner sep=0pt] at (0,0){\includegraphics[clip,width=0.28\textwidth]{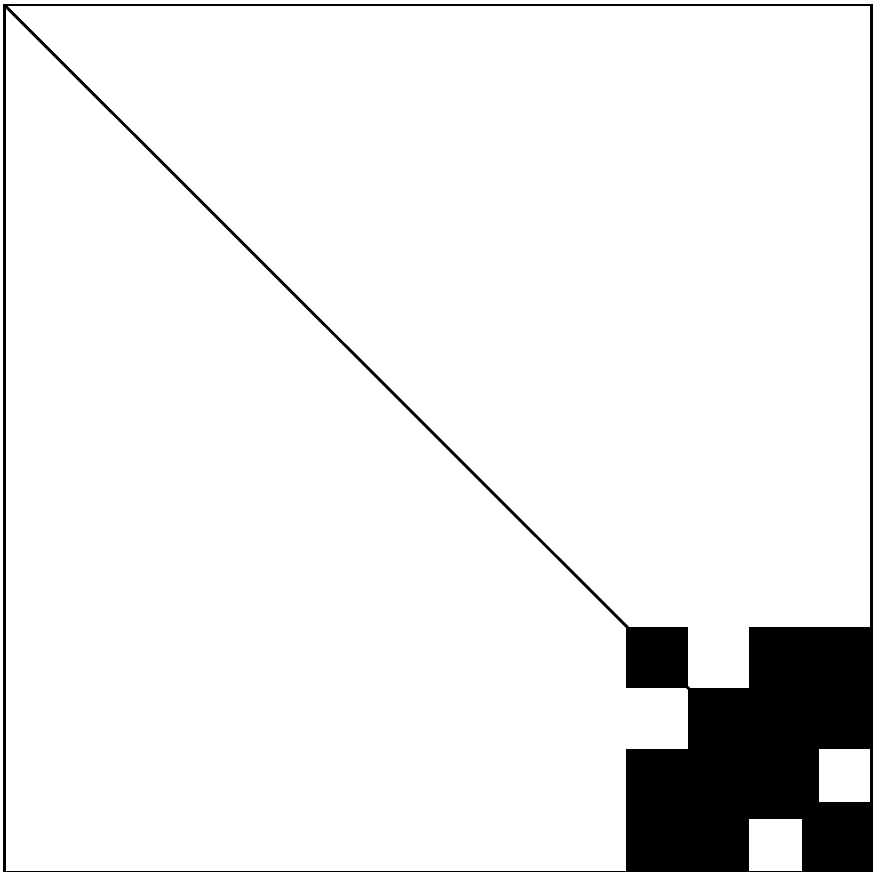}};
    \end{tikzpicture}}\,
    \subfloat[][After $S_2^\top Q_2$. Grey = $\OO{\varepsilon}$]{\begin{tikzpicture}
    \node[inner sep=0pt] at (0,0){\includegraphics[clip,width=0.28\textwidth]{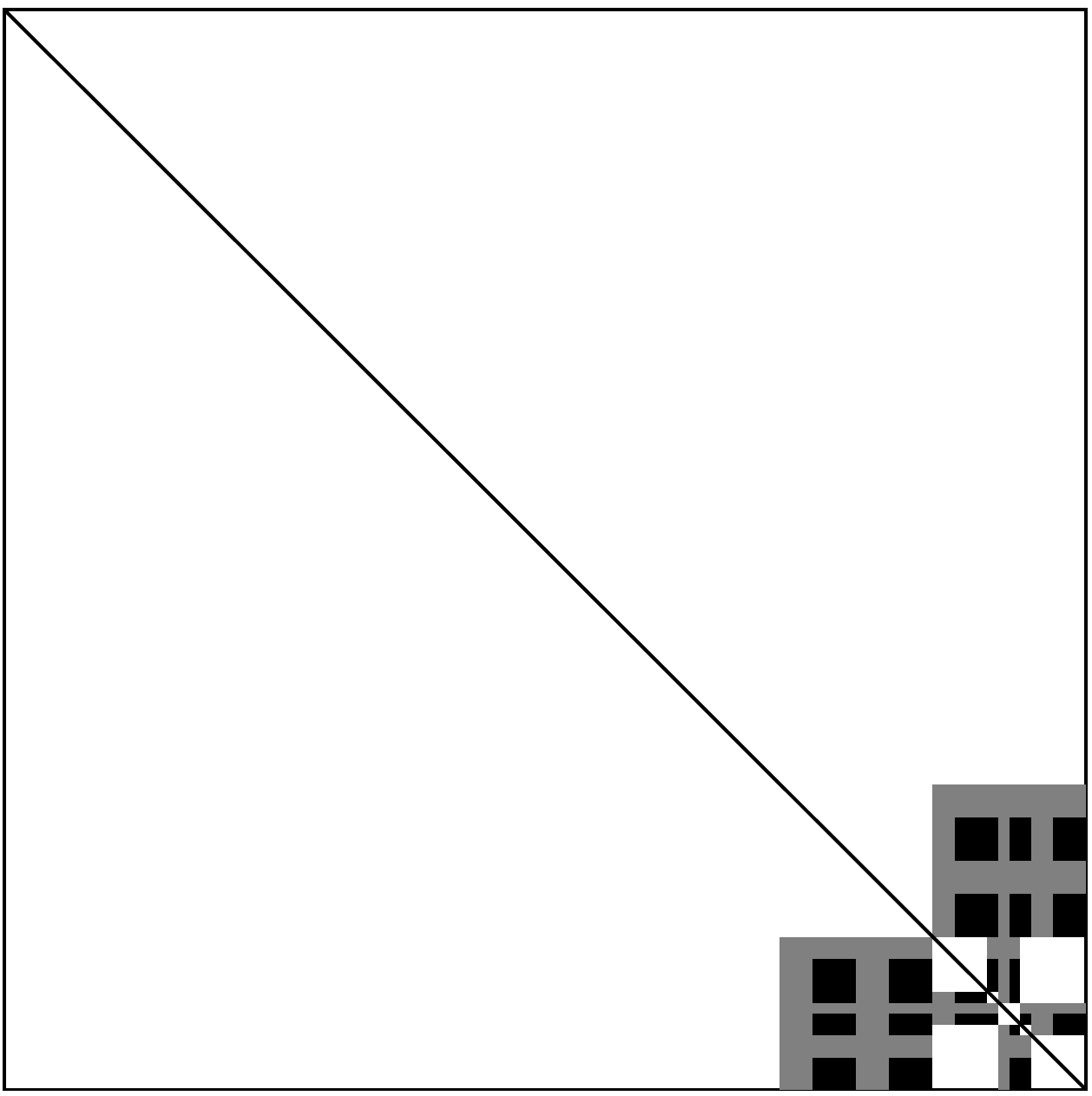}};
    \end{tikzpicture}}\,
 \\
 \subfloat[][\label{fig:spand-illustration-g}$A$, original graph]{\begin{tikzpicture}
        \def \l{1.2};
        \draw [rounded corners=3pt,fill=hcgreen] (-0.075*\l,-\l) rectangle (0.075*\l,-0.70*\l);
        \draw [rounded corners=3pt,fill=hcgreen] (-0.075*\l,-0.7*\l) rectangle (0.075*\l,-0.55*\l);
        \draw [rounded corners=3pt,fill=hcgreen] (-0.075*\l,-0.55*\l) rectangle (0.075*\l,-0.3*\l);
        \draw [rounded corners=3pt,fill=hcgreen] (-0.075*\l,-0.3*\l) rectangle (0.075*\l,-0.15*\l);
        \draw [rounded corners=3pt,fill=hcgreen] (-0.075*\l,-0.15*\l) rectangle (0.075*\l,0.1*\l);
        \draw [rounded corners=3pt,fill=hcgreen] (-0.075*\l,0.1*\l) rectangle (0.075*\l,0.25*\l);
        \draw [rounded corners=3pt,fill=hcgreen] (-0.075*\l,0.25*\l) rectangle (0.075*\l,0.5*\l);
        \draw [rounded corners=3pt,fill=hcgreen] (-0.075*\l,0.5*\l) rectangle (0.075*\l,0.65*\l);
        \draw [rounded corners=3pt,fill=hcgreen] (-0.075*\l,0.65*\l) rectangle (0.075*\l,\l);
        \draw [rounded corners=3pt,fill=hcred] (-\l,0.1*\l) rectangle (-0.075*\l,0.25*\l);
        \draw [rounded corners=3pt,fill=hcred] (0.075*\l,-0.15*\l) rectangle (0.5*\l,-0.3*\l);
        \draw [rounded corners=3pt,fill=hcred] (0.5*\l,-0.15*\l) rectangle (0.65*\l,-0.3*\l);
        \draw [rounded corners=3pt,fill=hcred] (0.65*\l,-0.15*\l) rectangle (\l,-0.3*\l);
        \draw [rounded corners=3pt,fill=hcblue] (-\l,0.5*\l) rectangle (-0.075*\l,0.65*\l);
        \draw [rounded corners=3pt,fill=hcblue] (0.5*\l,-0.15*\l) rectangle (0.65*\l,\l);
        \draw [rounded corners=3pt,fill=hcblue] (-\l,-0.55*\l) rectangle (-0.075*\l,-0.7*\l);
        \draw [rounded corners=3pt,fill=hcblue] (0.075*\l,-0.55*\l) rectangle (\l,-0.7*\l);
        \draw [rounded corners=3pt] (-\l,-\l)      rectangle (-0.075*\l,-0.7*\l);
        \draw [rounded corners=3pt] (-\l,-0.55*\l) rectangle (-0.075*\l,0.1*\l);
        \draw [rounded corners=3pt] (-\l,0.25*\l)  rectangle (-0.075*\l,0.5*\l);
        \draw [rounded corners=3pt] (-\l,0.65*\l)  rectangle (-0.075*\l,\l);
        \draw [rounded corners=3pt] (0.075*\l,-\l) rectangle (\l,-0.7*\l);
        \draw [rounded corners=3pt] (0.075*\l,-0.55*\l) rectangle (\l,-0.3*\l);
        \draw [rounded corners=3pt] (0.075*\l,-0.15*\l) rectangle (0.5*\l,\l);
        \draw [rounded corners=3pt] (0.65*\l,-0.15*\l) rectangle (\l,\l);
    \end{tikzpicture}}\noindent\;
    \subfloat[][After $E_1^\top$]{\begin{tikzpicture}
        \def \l{1.2};
        \draw [white] (-\l,-\l) rectangle (\l,\l);
        \draw [rounded corners=3pt,fill=hcgreen] (-0.075*\l,-\l) rectangle (0.075*\l,-0.70*\l);
        \draw [rounded corners=3pt,fill=hcgreen] (-0.075*\l,-0.7*\l) rectangle (0.075*\l,-0.55*\l);
        \draw [rounded corners=3pt,fill=hcgreen] (-0.075*\l,-0.55*\l) rectangle (0.075*\l,-0.3*\l);
        \draw [rounded corners=3pt,fill=hcgreen] (-0.075*\l,-0.3*\l) rectangle (0.075*\l,-0.15*\l);
        \draw [rounded corners=3pt,fill=hcgreen] (-0.075*\l,-0.15*\l) rectangle (0.075*\l,0.1*\l);
        \draw [rounded corners=3pt,fill=hcgreen] (-0.075*\l,0.1*\l) rectangle (0.075*\l,0.25*\l);
        \draw [rounded corners=3pt,fill=hcgreen] (-0.075*\l,0.25*\l) rectangle (0.075*\l,0.5*\l);
        \draw [rounded corners=3pt,fill=hcgreen] (-0.075*\l,0.5*\l) rectangle (0.075*\l,0.65*\l);
        \draw [rounded corners=3pt,fill=hcgreen] (-0.075*\l,0.65*\l) rectangle (0.075*\l,\l);
        \draw [rounded corners=3pt,fill=hcred] (-\l,0.1*\l) rectangle (-0.075*\l,0.25*\l);
        \draw [rounded corners=3pt,fill=hcred] (0.075*\l,-0.15*\l) rectangle (0.5*\l,-0.3*\l);
        \draw [rounded corners=3pt,fill=hcred] (0.5*\l,-0.15*\l) rectangle (0.65*\l,-0.3*\l);
        \draw [rounded corners=3pt,fill=hcred] (0.65*\l,-0.15*\l) rectangle (\l,-0.3*\l);
        \draw [rounded corners=3pt,fill=hcblue] (-\l,0.5*\l) rectangle (-0.075*\l,0.65*\l);
        \draw [rounded corners=3pt,fill=hcblue] (0.5*\l,-0.15*\l) rectangle (0.65*\l,\l);
        \draw [rounded corners=3pt,fill=hcblue] (-\l,-0.55*\l) rectangle (-0.075*\l,-0.7*\l);
        \draw [rounded corners=3pt,fill=hcblue] (0.075*\l,-0.55*\l) rectangle (\l,-0.7*\l);
    \end{tikzpicture}}\noindent\;
    \subfloat[][After $S_1^\top Q_1$]{\begin{tikzpicture}
        \def \l{1.2};
        \draw [white] (-\l,-\l) rectangle (\l,\l);
        \draw [rounded corners=3pt,fill=hcgreen] (-0.075*\l,-0.95*\l) rectangle (0.075*\l,-0.75*\l);
        \draw [rounded corners=3pt,fill=hcgreen] (-0.075*\l,-0.7*\l)  rectangle (0.075*\l,-0.55*\l);
        \draw [rounded corners=3pt,fill=hcgreen] (-0.075*\l,-0.5*\l)  rectangle (0.075*\l,-0.35*\l);
        \draw [rounded corners=3pt,fill=hcgreen] (-0.075*\l,-0.3*\l)  rectangle (0.075*\l,-0.15*\l);
        \draw [rounded corners=3pt,fill=hcgreen] (-0.075*\l,-0.1*\l)  rectangle (0.075*\l,0.05*\l);
        \draw [rounded corners=3pt,fill=hcgreen] (-0.075*\l,0.1*\l)   rectangle (0.075*\l,0.25*\l);
        \draw [rounded corners=3pt,fill=hcgreen] (-0.075*\l,0.30*\l)  rectangle (0.075*\l,0.45*\l);
        \draw [rounded corners=3pt,fill=hcgreen] (-0.075*\l,0.5*\l)   rectangle (0.075*\l,0.65*\l);
        \draw [rounded corners=3pt,fill=hcgreen] (-0.075*\l,0.7*\l)   rectangle (0.075*\l,0.95*\l);
        \draw [rounded corners=3pt,fill=hcred] (-0.9*\l, 0.1*\l)    rectangle (-0.15*\l,0.25*\l);
        \draw [rounded corners=3pt,fill=hcred] (0.15*\l,-0.15*\l)   rectangle (0.4*\l,-0.3*\l);
        \draw [rounded corners=3pt,fill=hcred] (0.5*\l, -0.15*\l)   rectangle (0.65*\l,-0.3*\l);
        \draw [rounded corners=3pt,fill=hcred] (0.7*\l, -0.15*\l)   rectangle (\l,-0.3*\l);
        \draw [rounded corners=3pt,fill=hcblue] (-0.9*\l,0.5*\l) rectangle (-0.15*\l,0.65*\l);
        \draw [rounded corners=3pt,fill=hcblue] (0.5*\l,-0.05*\l) rectangle (0.65*\l,0.9*\l);
        \draw [rounded corners=3pt,fill=hcblue] (-0.9*\l,-0.55*\l) rectangle (-0.15*\l,-0.7*\l);
        \draw [rounded corners=3pt,fill=hcblue] (0.15*\l,-0.55*\l) rectangle (0.9*\l,-0.7*\l);
    \end{tikzpicture}}\noindent\;
    \subfloat[][After merge]{\begin{tikzpicture}
        \def \l{1.2};
        \draw [white] (-\l,-\l) rectangle (\l,\l);
        \draw [rounded corners=3pt,fill=hcgreen] (-0.075*\l,-0.95*\l) rectangle (0.075*\l,-0.40*\l);
        \draw [rounded corners=3pt,fill=hcgreen] (-0.075*\l,-0.3*\l)  rectangle (0.075*\l,-0.15*\l);
        \draw [rounded corners=3pt,fill=hcgreen] (-0.075*\l,-0.1*\l)  rectangle (0.075*\l,0.05*\l);
        \draw [rounded corners=3pt,fill=hcgreen] (-0.075*\l,0.1*\l)   rectangle (0.075*\l,0.25*\l);
        \draw [rounded corners=3pt,fill=hcgreen] (-0.075*\l,0.35*\l)   rectangle (0.075*\l,0.95*\l);
        \draw [rounded corners=3pt,fill=hcred] (-0.9*\l, 0.1*\l)    rectangle (-0.15*\l,0.25*\l);
        \draw [rounded corners=3pt,fill=hcred] (0.15*\l, -0.15*\l)  rectangle (0.9*\l,-0.3*\l);
        \draw [rounded corners=3pt,fill=hcblue] (-0.9*\l,0.5*\l) rectangle (-0.15*\l,0.65*\l);
        \draw [rounded corners=3pt,fill=hcblue] (0.5*\l,-0.05*\l) rectangle (0.65*\l,0.9*\l);
        \draw [rounded corners=3pt,fill=hcblue] (-0.9*\l,-0.55*\l) rectangle (-0.15*\l,-0.7*\l);
        \draw [rounded corners=3pt,fill=hcblue] (0.15*\l,-0.55*\l) rectangle (0.9*\l,-0.7*\l);
    \end{tikzpicture}}\\ \noindent\;
    \subfloat[][After $E_2^\top$]{\begin{tikzpicture}
        \def \l{1.2};
        \draw [white] (-\l,-\l) rectangle (\l,\l);
        \draw [rounded corners=3pt,fill=hcgreen] (-0.075*\l,-0.95*\l) rectangle (0.075*\l,-0.40*\l);
        \draw [rounded corners=3pt,fill=hcgreen] (-0.075*\l,-0.3*\l)  rectangle (0.075*\l,-0.15*\l);
        \draw [rounded corners=3pt,fill=hcgreen] (-0.075*\l,-0.1*\l)  rectangle (0.075*\l,0.05*\l);
        \draw [rounded corners=3pt,fill=hcgreen] (-0.075*\l,0.1*\l)   rectangle (0.075*\l,0.25*\l);
        \draw [rounded corners=3pt,fill=hcgreen] (-0.075*\l,0.35*\l)   rectangle (0.075*\l,0.95*\l);
        \draw [rounded corners=3pt,fill=hcred] (-0.9*\l, 0.1*\l)    rectangle (-0.15*\l,0.25*\l);
        \draw [rounded corners=3pt,fill=hcred] (0.15*\l, -0.15*\l)  rectangle (0.9*\l,-0.3*\l);
    \end{tikzpicture}}\noindent\;
    \subfloat[][After $S_2^\top Q_2$]{\begin{tikzpicture}
        \def \l{1.2};
        \draw [white] (-\l,-\l) rectangle (\l,\l);
        \draw [rounded corners=3pt,fill=hcgreen] (-0.075*\l,-0.85*\l) rectangle (0.075*\l,-0.5*\l);
        \draw [rounded corners=3pt,fill=hcgreen] (-0.075*\l,-0.3*\l)  rectangle (0.075*\l,-0.15*\l);
        \draw [rounded corners=3pt,fill=hcgreen] (-0.075*\l,-0.1*\l)  rectangle (0.075*\l,0.05*\l);
        \draw [rounded corners=3pt,fill=hcgreen] (-0.075*\l,0.1*\l)   rectangle (0.075*\l,0.25*\l);
        \draw [rounded corners=3pt,fill=hcgreen] (-0.075*\l,0.45*\l)   rectangle (0.075*\l,0.85*\l);
        \draw [rounded corners=3pt,fill=hcred] (-0.8*\l, 0.1*\l)    rectangle (-0.25*\l,0.25*\l);
        \draw [rounded corners=3pt,fill=hcred] (0.25*\l, -0.15*\l)   rectangle (0.8*\l,-0.3*\l);
    \end{tikzpicture}}\noindent\;
    \subfloat[][After merge]{\begin{tikzpicture}
        \def \l{1.2};
        \draw [white] (-\l,-\l) rectangle (\l,\l);
        \draw [rounded corners=3pt,fill=hcgreen] (-0.075*\l,-0.5*\l) rectangle (0.075*\l,0.5*\l);
        \draw [rounded corners=3pt,fill=hcred] (-0.8*\l, 0.1*\l)    rectangle (-0.25*\l,0.25*\l);
        \draw [rounded corners=3pt,fill=hcred] (0.25*\l, -0.15*\l)  rectangle (0.8*\l,-0.3*\l);
    \end{tikzpicture}}\noindent\;
    \subfloat[][After $E_3^\top$]{\begin{tikzpicture}
        \def \l{1.2};
        \draw [white] (-\l,-\l) rectangle (\l,\l);
        \draw [rounded corners=3pt,fill=hcgreen] (-0.075*\l,-0.5*\l) rectangle (0.075*\l,0.5*\l);
    \end{tikzpicture}}\noindent\;
\caption{Illustration of the spaND algorithm.
Given $A$, create a ND tree of depth 4 and cluster $A$ accordingly, as shown on \autoref{fig:spand-illustration-g}.
This cartoon shows clusters of vertices of $A$, where the edges (not shown) should be thought as connecting close 
neighbors (like on a regular 2D grid).
Denote by $E_\ell$, $S_\ell$ and $Q_\ell$ all eliminations, scalings and sparsifications at level $\ell$.
Then, we have 
$E_4 E_3 ( Q_2^\top S_2 E_2 ) ( Q_1^\top S_1 E_1 ) A ( E_1^\top S_1^\top Q_1 ) ( E_2^\top S_2^\top Q_2 ) E_3^\top E_4^\top \approx I$.
The top rows show the evolution of the trailing matrix; bottom rows show the evolution of the matrix graph after eliminations,
sparsifications and merges.
We represent the sparsification process by shrinking the size of the clusters.}
\label{fig:spand-illustration}
\end{figure}

\section{Theoretical results} \label{sec:theoretical_results}

We here discuss a couple of facts related to the above factorizations.

\subsection{Sparsification and Error on the Schur Complement}

Consider a framework where
\[ A = \begin{bmatrix} A_{pp} & A_{pn} \\ A_{np} & A_{nn} \end{bmatrix}. \]
Without loss of generality, we do not include the $w$--$w$ and $w$--$n$ blocks, as they are completely
disconnected from $p$ and unaffected by the sparsification.
Then, consider a general low-rank approximation
\[ A_{pn} = X_1 Y_1^\top + X_2 Y_2^\top \]
where $\| Y_2 \| = \OO{\epsilon}$.
Using $X = \begin{bmatrix} X_1 & X_2 \end{bmatrix}$ as a change of variable, $A$ becomes
\[ \begin{bmatrix} 
        X^{-1}  & \\ 
                & I 
    \end{bmatrix}
    \begin{bmatrix} 
        A_{pp}  & A_{pn}    \\ 
        A_{np}  & A_{nn} 
    \end{bmatrix}
    \begin{bmatrix} 
        X^{-\top}   &   \\ 
                    & I 
    \end{bmatrix} 
    = 
    \begin{bmatrix} 
        B_{pp}  & Y^\top \\ 
        Y       & A_{nn} 
    \end{bmatrix} 
    =
    \begin{bmatrix} 
        B_{11}  & B_{12} & Y_1^\top \\
        B_{21}  & B_{22} & Y_2^\top \\
        Y_1     & Y_2    & A_{nn} 
    \end{bmatrix} 
\]

The sparsification process then assumes $Y_2 = 0$ and eliminates the $2$--$2$ block. 
The true $n$--$n$ Schur complement is
\[ S = A_{nn} - Y_2 B_{22}^{-1} Y_2^\top \]
while the approximate one, ignoring $Y_2$, is simply $A_{nn}$. The error is then
\[ E_{nn} = Y_2 B_{22}^{-1} Y_2^\top. \]

We can now consider the different variants proposed in \autoref{sec:sparsification}:
\begin{itemize}
    \item (\texttt{In}) \algo{} using interpolative factorization and no diagonal block scaling.
    This gives $B_{22} = C_{ff}$ and $Y_2 = A_{nf} - A_{nc} T_{cf}$, so that
    \[ \| E_{nn} \|_2 \leq \| Y_2 \|_2^2 \| B_{22}^{-1} \|_2 = \OO{\varepsilon^2} \| C_{ff}^{-1} \|. \]
    \item (\texttt{InS}) \algo{} using interpolative factorization and diagonal block scaling. This leads to
    \[ \| E_{nn} \|_2 \leq \| Y_2 \|_2^2 \| B_{22}^{-1} \|_2 = \OO{\varepsilon^2} \| C_{ff}^{-1} \|. \]
    However, since $A_{ss} = I$, $C_{ff} = I + T_{cf}^\top T_{cf}$, we can expect, if $T_{cf}$ is small (which happens if the right algorithm is employed, see \cite{miranian2003strong}), 
    $\|C_{ff}^{-1}\|$ to be much closer to $1$.
    \item (\texttt{OrthS}) \algo{} using orthogonal factorization and diagonal block scaling. In this case, we 
    simply have $B_{22} = I$ and $Y_2 = W_{fn}^\top$, and so,
    \[ \| E_{nn} \|_2 \leq \| Y_2 \|_2^2 = \OO{\varepsilon^2}. \]
\end{itemize}
Table \ref{tab:error_approx} summarizes the results. We notice that those three variants have roughly the same cost, since
they \rev{require} a RRQR over $A_{pn}$ or $A_{np}$, and their cost is proportional to $\OO{|p||n||c|}$ with $|c|$ the 
resulting rank \cite[Algorithm 5.4.1]{golub2013matrix}

\begin{table}
    \centering
    {\setlength{\extrarowheight}{5pt}
        \begin{tabular}{l|lll}
        Version             & Error on $n$--$n$                         & Cost &        \\
        \hline
        \texttt{In}         & $\OO{\varepsilon^2} \| C_{ff}^{-1} \|_2$  & $\OO{|p||n||c|}$ &    $C_{ff}$ arbitrary                \\
        \texttt{InS}        & $\OO{\varepsilon^2} \| C_{ff}^{-1} \|_2$  & $\OO{|p||n||c|}$ &    $C_{ff} = I + T_{cf}^\top T_{cf}$ \\
        \texttt{OrthS}      & $\OO{\varepsilon^2}$                      & $\OO{|p||n||c|}$ &                                      \\    
        \end{tabular}
    }
    \caption{Error for various approximations. The left column indicate the sparsification variant:\texttt{In} means 
    interpolative and no scaling; \texttt{InS} means interpolative and scaling; \texttt{OrthS} means orthogonal and scaling.}
    \label{tab:error_approx}
\end{table}

The key is that the interpolative error bound (without and to some extent with scaling) includes the potentially large $\|C_{ff}^{-1}\|_2$ term, which is not present with
the \texttt{OrthS} version.
This indicates that we can expect the versions with diagonal scaling to have smaller errors $E_{nn}$. 
This will be verified in \autoref{sec:numerical_experiments}.

\subsection{Stability of the Block Scaling \& Orthogonal Transformations Variant}
In addition to a smaller $n$--$n$ error as explained previously, the \texttt{OrthS} version provides stability guarantees.

\begin{theorem} Let
    \[ A = \begin{bmatrix} 
        I           & A_{pn} \\
        A_{pn}^\top & A_{nn} \end{bmatrix} \]
    be a SPD matrix. For any low-rank approximation
    \[ A_{pn} = Q_{pf} W_{fn} + Q_{pc} W_{cn} \]
    where $Q_p = \begin{bmatrix} Q_{pf} & Q_{pc} \end{bmatrix}$ is a square orthogonal matrix, 
    \[ B_p = 
        \begin{bmatrix} 
            I           & W_{cn} \\
            W_{cn}^\top & A_{nn} 
        \end{bmatrix}  
    \]
    is SPD.
\end{theorem}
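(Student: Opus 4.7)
The plan is to exhibit $B_p$ as a principal submatrix of a matrix obtained from $A$ by an orthogonal congruence, and then invoke the standard fact that principal submatrices of SPD matrices are SPD.

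First I would form the block-diagonal congruence matrix $\tilde{Q} = \begin{bmatrix} Q_p & 0 \\ 0 & I \end{bmatrix}$. Since $Q_p$ is square orthogonal, $\tilde{Q}$ is orthogonal, hence nonsingular. Therefore $\tilde{A} := \tilde{Q}^\top A \tilde{Q}$ is congruent to $A$, and congruence preserves symmetry and positive-definiteness, so $\tilde{A}$ is SPD.

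Next I would compute $\tilde{A}$ explicitly. Because $Q_p^\top Q_p = I$, the $(p,p)$ block becomes $I$; because $Q_{pf}$ and $Q_{pc}$ have orthonormal columns and are mutually orthogonal, the low-rank expansion $A_{pn} = Q_{pf}W_{fn} + Q_{pc}W_{cn}$ yields $Q_p^\top A_{pn} = \begin{bmatrix} W_{fn} \\ W_{cn} \end{bmatrix}$. Splitting the $p$ block into $f$ and $c$ gives
\[
\tilde{A} = \begin{bmatrix} I & 0 & W_{fn} \\ 0 & I & W_{cn} \\ W_{fn}^\top & W_{cn}^\top & A_{nn} \end{bmatrix}.
\]
Observe that $B_p$ is exactly the principal submatrix of $\tilde{A}$ obtained by deleting the rows and columns indexed by $f$.

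Finally I would invoke the classical result that any principal submatrix of an SPD matrix is SPD (by restricting the quadratic form $x^\top \tilde{A} x > 0$ to vectors $x$ whose $f$-entries vanish). This gives that $B_p$ is SPD, completing the proof. There is no real obstacle here: the only subtlety is being careful that the $Q_{pf}, Q_{pc}$ blocks are mutually orthogonal with orthonormal columns, which is exactly the hypothesis that $Q_p$ is a square orthogonal matrix, and that the decomposition $A_{pn} = Q_{pf}W_{fn} + Q_{pc}W_{cn}$ is consistent with $W_{fn} = Q_{pf}^\top A_{pn}$ and $W_{cn} = Q_{pc}^\top A_{pn}$.
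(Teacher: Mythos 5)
Your proof is correct, and it takes a genuinely different route from the paper. You show that $B_p$ is a principal submatrix of the orthogonal congruence $\tilde{Q}^\top A \tilde{Q}$ (with $\tilde{Q} = \operatorname{diag}(Q_p, I)$, which is precisely the transformation $Q_p$ of \autoref{eq:orthogonal} that the algorithm actually applies), and then you invoke the facts that congruence preserves positive definiteness and that principal submatrices of SPD matrices are SPD. The paper instead argues via Schur complements: it computes the $n$--$n$ Schur complement $S_B = A_{nn} - W_{cn}^\top W_{cn}$ of $B_p$ and the $n$--$n$ Schur complement $S_A = A_{nn} - W_{cn}^\top W_{cn} - W_{fn}^\top W_{fn}$ of $A$, observes $S_B = S_A + W_{fn}^\top W_{fn} \succeq S_A \succ 0$, and concludes. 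Your argument is the more elementary of the two and arguably the more transparent, since it makes explicit that $B_p$ is literally the trailing matrix produced by the sparsification step after dropping the decoupled $f$ variables. What the paper's Schur-complement route buys is the quantitative by-product $S_B = S_A + W_{fn}^\top W_{fn}$, i.e., $S_B \succeq S_A$ and $S_B = S_A + \OO{\varepsilon^2}$, which the paper uses immediately after the corollary to connect with known results on weak-admissibility low-rank approximation; your argument establishes positive definiteness but does not directly yield that error estimate.
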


\begin{proof}
    The $n-n$ Schur Complement of $B_p$ (when eliminating $c$) is
    \[ S_B = A_{nn} - W_{cn}^\top W_{cn}. \]
    On the other hand, the $n-n$ Schur Complement of $A$ (when eliminating $p$) is
    \[ S_A = A_{nn} - A_{np}^\top A_{pn} = A_{nn} - W_{cn}^\top W_{cn} - W_{fn}^\top W_{fn} \]
    which implies
    \[ S_B = S_A + \rev{W_{fn}^\top W_{fn}}. \]
    Since $A$ is SPD, so is $S_A$, and since $W_{fc}^\top W_{fc} \succeq 0$, we find that $S_B$ is SPD. 
    Since the $c-c$ \rev{block} of $B_p$ is identity, we conclude that $B_p$ is SPD.
\end{proof}

\begin{corollary}
    For any SPD matrix and $\varepsilon \geq 0$, the sparsified matrices of the spaND algorithm using block diagonal scaling and orthogonal low-rank approximations (\texttt{OrthS}) remain SPD.
    In other words, the algorithm never breaks down.
\end{corollary}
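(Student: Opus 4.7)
The plan is to argue by induction over the sequence of elementary operations $E_s$, $S_p$, $Q_p$ applied in \autoref{algo:spand}. Let $A^{(0)} = A$ and let $A^{(k)}$ denote the trailing matrix after the $k$-th operation. The inductive hypothesis is that $A^{(k)}$ is SPD (restricted to the not-yet-eliminated dofs) for every $k$, and I want to check that each of the three operation types preserves this.

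First I would handle the elimination step $E_s$. If $A^{(k)}$ is SPD, then its principal submatrix $A_{ss}$ is SPD, so the Cholesky factor $L_s$ with $L_s L_s^\top = A_{ss}$ exists and is invertible. The operation $E_s A^{(k)} E_s^\top$ is a congruence by an invertible matrix, hence SPD; and the resulting Schur complement block $A_{nn} - A_{ns} A_{ss}^{-1} A_{sn}$ is well known to be SPD as well (Schur complements of SPD matrices are SPD). Second, for the scaling step $S_p$: by the same principal-submatrix argument, $A_{pp}$ is SPD, so the Cholesky factor $L_p$ exists and $S_p$ is invertible, and $S_p A^{(k)} S_p^\top$ is SPD by congruence. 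This is also where the hypothesis $A_{pp}=I$ needed by the Theorem is produced. Third, for the sparsification step $Q_p$: immediately after a scaling the trailing matrix has the form assumed in the Theorem (diagonal block equal to $I$, and $A_{pn}$ decomposed via an orthogonal change of variables). Congruence by the orthogonal $Q_p$ keeps SPD, and then the Theorem shows that when we drop the $\OO{\varepsilon}$ coupling of $f$ to $n$ and eliminate $f$, the matrix $B_p$ on the remaining dofs is still SPD. Merging of clusters is purely a relabeling of rows and columns and therefore preserves SPDness trivially.

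Chaining these three facts through the outer loop over levels $\ell = L,\dots,1$ then gives the claim: every intermediate trailing matrix, and in particular every matrix handed to the next elimination, is SPD. Since each step that requires a factorization ($L_s$ for $E_s$, $L_p$ for $S_p$, and a rank-revealing QR for $Q_p$) only needs the corresponding diagonal block to be SPD, and we have just certified this, no step can break down.

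The only place that needs care is the application of the Theorem inside the induction: it assumes the diagonal block equals $I$ exactly. In the algorithm this is true only immediately after the scaling $S_p$, and not after subsequent eliminations that may update neighbors of $p$. Ensuring the bookkeeping lines up — i.e.\ verifying that the ordering (scale, then sparsify, then merge) in \autoref{algo:spand} always presents the Theorem with a matrix in its required form — is the main (though essentially notational) obstacle; once this is checked, everything else is a direct application of the three one-step preservation facts above.
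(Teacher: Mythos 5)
Your induction over the elementary operations is correct and is precisely the argument the paper intends: the corollary is stated without proof as an immediate consequence of the theorem, and the missing glue is exactly your three one-step preservation facts (principal submatrices, Schur complements, and congruences by invertible or orthogonal matrices all preserve SPDness, and the theorem handles the truncation). The bookkeeping point you flag does check out, since within a level the scalings and sparsifications of the other interfaces only act on their own rows and columns and never touch the $pp$ diagonal block, so $A_{pp}=I$ still holds when the theorem is invoked on $p$.
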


Note that the above corollary does \emph{not} depend on the quality of the low-rank approximation, i.e., it works even for $\epsilon = 0$. 
It merely relies on the fact that the truncated error ($Q_{pf} W_{fn}$) is orthogonal to what is retained ($Q_{pc} W_{cn}$) 
and that the scheme is using a ``weak admissibility'' criterion (\emph{all} edges of $p$ are compressed).
Finally, note that the above proof also shows that
\[ S_B = S_A + \OO{\varepsilon^2}, \quad S_B \succeq S_A. \]
This is a classical result in the case of low-rank approximation using weak admissibility (see \cite{xia2010robust, 
xia2017effective} for instance).

\subsection{Complexity analysis} \label{sec:complexity}

\rev{We discuss the complexity of spaND and contrast it with the usual ND algorithm.}

\paragraph{Classical ND}
\rev{Nested dissection leads to a binary tree decomposition of the graph of $A$ (although $n$-ary trees are possible). In the literature, the nested dissection tree is often defined as a tree of separators. Here for convenience, we take a slightly different viewpoint where each node is a subgraph of $G$. Both viewpoints are equivalent. We start with the root node that corresponds to the full graph $G$ of size $N$. We define the children nodes as the subgraphs that are disconnected by the separator. This process is applied recursively to define the entire tree.}

\rev{In our complexity analysis, we are going to assume that all the graphs for sparse matrices satisfy the following nested dissection property. We assume that leaf nodes contain at most $N_0$ nodes, where $N_0 \in \OO{1}$. Consider a node $i$ in the tree, of size $n_i$. Consider the set $D_i$ of all nodes $j$ such that they are descendant of $i$ and they contain at least $n_i / 2$ nodes. We assume that $| D_i | \in \OO{1}$, that is the size of this set is bounded for all $i$ and $N$. This property is satisfied for $\beta$-balanced trees in which all children subgraphs have size $\beta n_i$, for some $0 < \beta < 1$ independent of $i$ and $N$. In that case we have $| D_i | \leq 1 + \log 2 / \log \beta^{-1}$.} 

\rev{We assume that all separators are minimal in the sense that each node in a separator is connected to the two children subgraphs in the nested dissection partitioning (otherwise this node can be moved to one of the subgraphs).}

\rev{Finally, we assume that a subgraph of size $n_i$ is connected to at most $O(n_i^{2/3})$ nodes in $G$.}

\rev{As far as the authors know, all matrices that arise in the discretization of partial differential equations in 3D using a local stencil satisfy this property.}

\rev{Consider now a node $i$ of size $2^{-\ell} N \le n_i < 2^{-\ell+1} N$ (see \autoref{fig:nested_dissection_3D}). The associated separator has size at most
\[ c_\ell \in \OO{ 2^{-2\ell/3} N^{2/3} } \]
Further, the fill-in results in at most $\OO{2^{-2\ell/3} N^{2/3}}$ \revrev{non-zero} entries in each row. The cost of eliminating a separator in that size range is bounded by
\[ e_l \in \OO{ \left(2^{-2\ell/3} N^{2/3}\right)^3 } = \OO{2^{-2\ell} N^2} \]
From our assumption, the number of clusters of size $2^{-\ell} N \le n_i < 2^{-\ell+1} N$ is bounded by $2^{\ell}$. Hence, the overall factorization cost is bounded by
\[ t_\text{ND,fact} \in \OO{\sum_{\ell=0}^L 2^{\ell} e_{\ell}} = \OO{ \sum_{\ell=0}^L 2^{-\ell} N^2 } = \OO{N^2}, \quad
L \in \Theta( \log(N/N_0) ) \]
We recover the usual computational cost of nested dissection for 3D meshes. Most of the computational expense is at the top of the nested dissection tree, with the final separator of size $N^{2/3}$.}

\rev{The complexity of applying the factorization can be derived similarly. Since for each cluster of size $n_i$, its separator has $\OO{2^{-2\ell/3} N^{2/3}}$ fill-in entries in its rows, the related solve cost is $\OO{2^{-4\ell/3} N^{4/3}}$ and the cost of one solve is
\[ t_\text{ND,apply} \in \OO{ \sum_{\ell=0}^L 2^{-\ell/3} N^{4/3} } = \OO{N^{4/3}} \]%
}

\paragraph{spaND}

\rev{On the other hand, assume that the sparsification is able to decrease each separator size before elimination from $c_\ell$ to
\[ s_\ell \in \OO{ 2^{-\ell/3} N^{1/3} } \]
This means that the rank scales roughly like the diameter of the separators. This is also the rank of the off-diagonal blocks for separators in the original matrix $A$. The assumption in some sense is that the rank of far-away fill-ins is $O(1)$. This is comparable with complexity assumptions in the fast multipole method for example.}

\rev{We now discuss a few additional assumptions regarding the construction of the interfaces, in order to guarantee the final $\OO{N \log N}$ cost. Recall that interfaces are used for sparsification and correspond to a multilevel partitioning of the separators. We will say that two nodes $(i,j)$ at the same level in the nested dissection tree are neighbors if there is a node in $G$ that belongs to a separator at this level or above, and that is connected to $i$ and $j$, in the graph $G$. We will assume that each node has only $\OO{1}$ neighbors. Under this assumption, each interface is connected to $\OO{1}$ interfaces at the same level.}

\rev{Considering the computational cost, for all nodes of size $2^{-\ell} N \le n_i < 2^{-\ell+1} N$, the cost can be divided into:
\begin{itemize}
\item eliminating separators. With the same reasoning as previously, and since an interface is connected to $\OO{1}$ interfaces, the cost is bounded by 
\[ \OO{ (2^{-\ell/3} N^{1/3})^3 } = \OO{2^{-\ell} N} \]
\item scaling and sparsifying the remaining interfaces. By construction the size of each interface is in $\OO{ 2^{-\ell/3} N^{1/3} }$. Since sparsification has cost $\OO{m^2 n}$ for a matrix block of size $m \times n$, the cost of sparsifying one interface is bounded similarly by $\OO{2^{-\ell} N}$.
\end{itemize}
Hence, under our assumptions, the overall factorization cost for spaND is
\[ t_\text{spaND,fact} \in \OO { \sum_{\ell=1}^L 2^\ell \; 2^{-\ell} N } = \OO{N \log N} \]
The complexity of applying the factorization can be derived like previously. A direct calculation leads to
\[ t_\text{spaND,apply} \in \OO{ \sum_{\ell=0}^L 2^\ell \left( 2^{-\ell/3} N^{1/3} \right)^2 } 
= \OO{ \sum_{\ell=1}^L 2^{\ell/3} N^{2/3} } = \OO{N} \]%
}

\rev{Finally, notice that in both cases the memory complexity scales like the factorization application.
Section~\ref{sec:scalings_with_problem_size} presents some experimental results regarding separator sizes as a function of $N$.}

\section{Numerical Experiments} \label{sec:numerical_experiments}

This section presents applications of the algorithm on various problems. All matrices are symmetric, real and 
positive-definite.

We use the following notation throughout this section:
\begin{itemize}
    \item $\tf$ is the factorization time (in seconds), \rev{not} including partitioning;
    \item $\tp$ is the partitioning time (in seconds);
    \item $\ts$ is the total time (in seconds) required for CG to reach a relative residual \rev{$\|Ax-b\|_2/\|b\|_2$} of $10^{-12}$. \rev{It is the total time to reach convergence.}
    While this is quite a small \rev{value}, \rev{being} able to reach those tolerances is a good indication of the numerical stability of the 
    algorithm (i.e., that the preconditioner does not prevent CG from converging to a small tolerance);
    \item $\ncg$ is the associated number of CG steps;
    \item $\st$ is the size of the top separator right before elimination;
    \item $\mf$ is the number of non-zero entries in the factorization;
\end{itemize}
On top of this, at some point we compare \algo{} to classical ``exact'' ND (using \algo{} with no compression \& scaling; 
``Direct'') and to a classical ILU(0) \cite{saad2003iterative} (``ILU(0)'').

All tests where run on a machine with \rev{300 GB} of RAM and a \rev{Intel(R) Xeon(R) Gold 5118 CPU at 2.30GHz}. The algorithm is sequential and was written in C++. We use \rev{GCC 8.1.0} and \rev{Intel(R) MKL 2019 for Linux} for the BLAS \& LAPACK operations. When no geometry information is available, we use Metis 5.1 \cite{karypis1998fast} for the vertex-separator routine. We use Ifpack2 \cite{Ifpack2} for ILU(0). Low-rank approximations are performed using LAPACK's geqp3 \cite{anderson1999lapack}. The truncation uses a simple rule, truncating based on the absolute value of the diagonal entries of the $R$ factor. This means that, given R, we select the first $r$ rows, where $\frac{|R_{ii}|}{|R_{11}|} \geq \varepsilon$ for $1 \leq i \leq r$.

\subsection{Impact of Diagonal Scaling \& Orthogonal Transformations}

In this first set of experiments we compare, empirically, the three variants of the algorithm:
\begin{itemize}
    \item (\texttt{In}) \algo{} using interpolative factorization and no diagonal block scaling;
    \item (\texttt{InS}) \algo{} using interpolative factorization and diagonal block scaling;
    \item (\texttt{OrthS}) \algo{} using orthogonal factorization and diagonal block scaling.
\end{itemize}

This should be contrasted with prior work \cite{ho2016hierarchical} where the algorithm was using the interpolative
only variant (with no scaling).

\subsubsection{High contrast $2D$ Laplacians} \label{sec:2D}

We first consider $2D$ elliptic equations
\begin{equation} \label{eq:elliptic} \nabla(a(x) \cdot \nabla u(x)) = f \quad \forall x \in \Omega = [0,1]^2,\; u|_{\partial \Omega} = 0 \end{equation}
where $a(x)$ is a quantized high contrast field with high of $\rho$ and low of $\rho^{-1}$ 
and where \ref{eq:elliptic} is discretized with a 5-points stencil.
This leads to the following discretization
\begin{align*} & ( a_{i-1/2,j} + a_{i+1/2,j} + a_{i,j-1/2} + a_{i,j+1/2} ) u_{ij} \\
& - a_{i-1/2,j} u_{i-1,j}
- a_{i+1/2,j} u_{i+1,j}
- a_{i,j-1/2} u_{i,j-1}
- a_{i,j+1/2} u_{i,j+1} = h^2 f_{ij} \end{align*}
The field $a$ is built in the following way:
\begin{itemize}
\item create a random $(0,1)$ array $\hat a_{ij}$;
\item smooth $\hat a$ by convolving it with a unit-width Gaussian;
\item quantize $\hat a$ 
\[ a_{ij} = \left\{ \begin{matrix} \rho & \text{ if } & \hat a_{ij} \geq 0.5 \\
                           \rho^{-1} & \text{ else} & \end{matrix} \right. \]
\end{itemize}
\autoref{fig:contrast} gives an example of high contrast field for $n=32$ and $n=128$.

\begin{figure}
\centering
\includegraphics[width=0.3\textwidth]{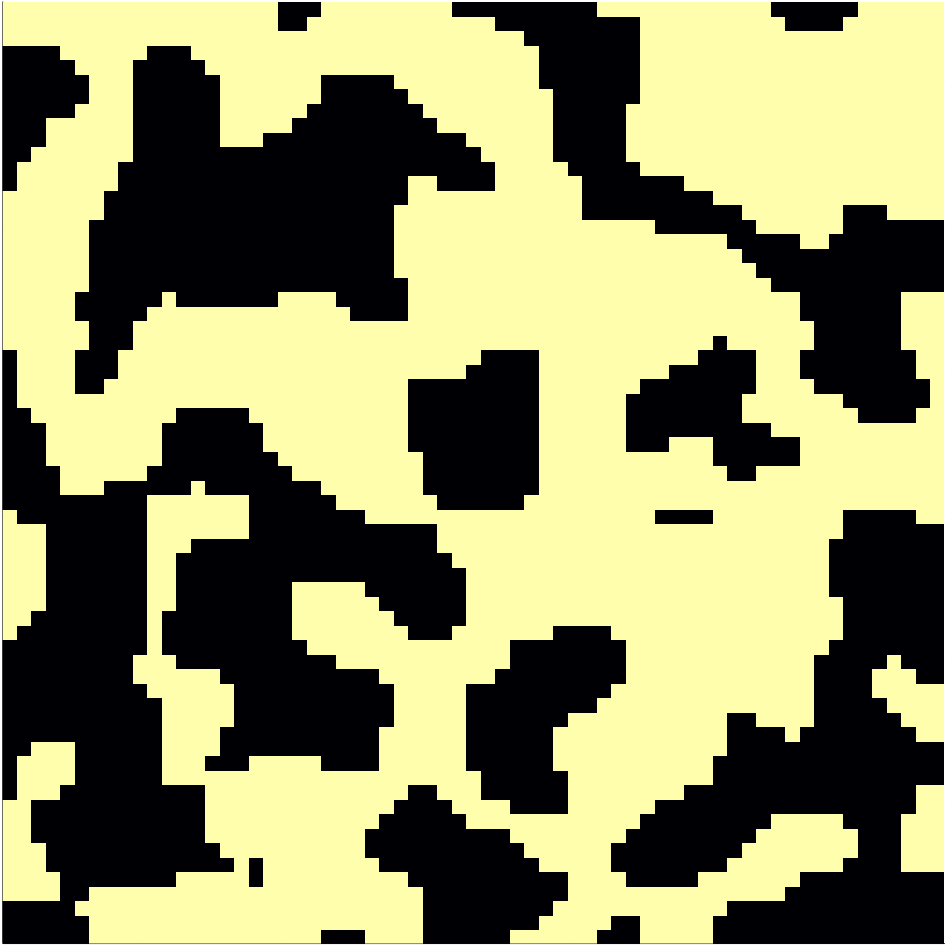}
\includegraphics[width=0.3\textwidth]{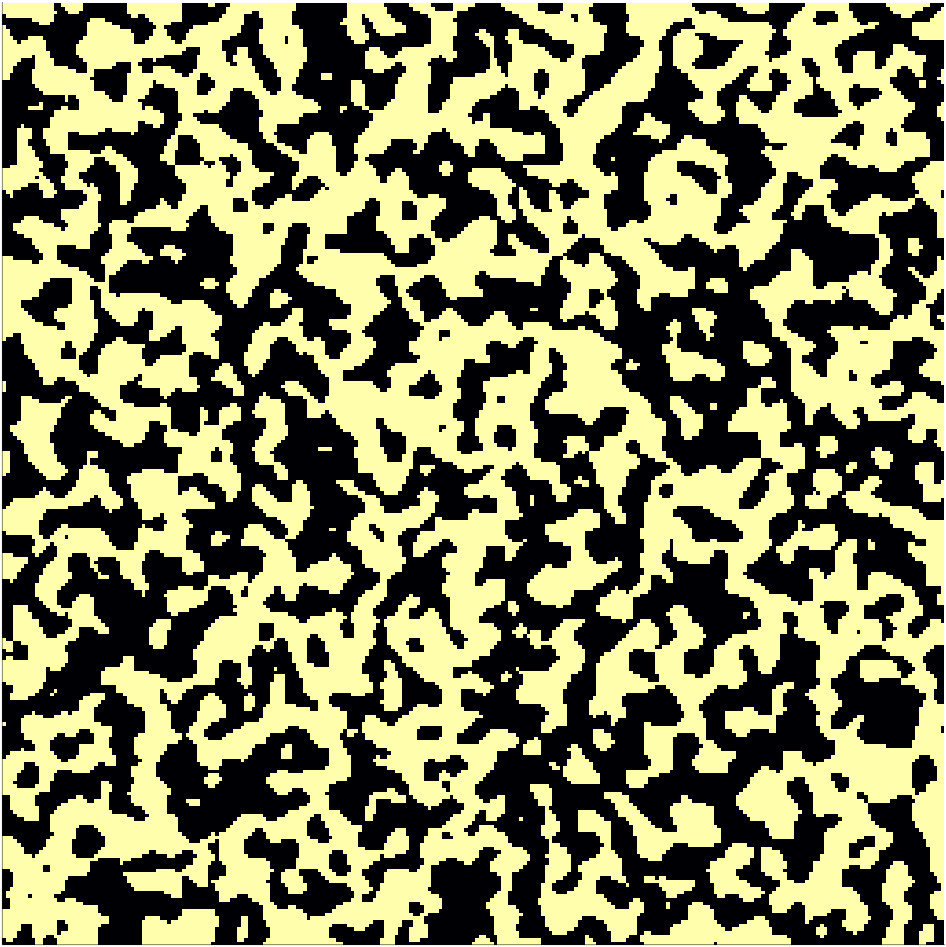}
\caption{A quantized high contrast field with lows of $\rho^{-1}$ and highs of $\rho$ for $n=32$ (left) and $n=128$ (right). 
The features sizes are roughly constant as we increase the mesh size $n$.}
\label{fig:contrast}
\end{figure}

We compare the number of iterations CG \cite{hestenes1952methods} needs to reach a residual of $10^{-12}$. 
In all those experiments, a missing value indicates the factorization was not SPD and, at some point, Cholesky (\autoref{sec:elimination}) failed. 
Given that the problem is defined on a regular mesh, we use a variant of \autoref{algo:ordering_clustering} where the vertex-separator used is based on geometry. This leads to a more regular clustering and, in general, to slightly better performances (in terms of time or memory --- CG iterations and the preconditioner accuracy are usually unaffected).

\autoref{fig:lapl_rho} gives results for $\rho = 1$ to $\rho = 1000$.
We compare the three variants for various $\varepsilon$ and problem size $N = n^2$.
We observe three things from the experiment. First, the number of iterations, particularly at moderate accuracies 
($\varepsilon = 10^{-1}$ or $10^{-2}$), is greatly reduced using \texttt{InS}. 
Further, the \texttt{OrthS} variant is usually the most accurate. 
This is likely due to the improved robustness and accuracies of the orthogonal transformations versus the interpolative 
ones.
Finally, we see that, while the \texttt{In} and \texttt{InS} variants may fail due to non-SPD approximations, the 
\texttt{OrthS} never fails and can always be run, even at $\varepsilon \approx 1$. 
We finally note that the small target residual of $10^{-12}$ in CG illustrates the good numerical properties of the 
preconditioner.
Previous work \cite{ho2016hierarchical} was focused on the interpolative only variant.
Both the scaling and orthogonal transformations greatly improve the algorithm: they reduce the CG 
iteration count and guarantee that the preconditioner stays SPD for SPD problems.

\begin{figure}
\begin{tikzpicture} 
	\begin{groupplot}[
	    group style={
	        group name=my plots,
	        group size=2 by 2,
	        xlabels at=edge bottom,
	        xticklabels at=edge bottom,
	        vertical sep=0.1cm,
			horizontal sep=.1cm,
	        ylabels at=edge left,
	        yticklabels at=edge left,
	    },
		ymax=490,ymin=1,
		xtick={512, 2048, 8192},
		xticklabels={512, 2048, 8192},
		xlabel={$n$},
		ylabel={CG iterations},
		]
        \nextgroupplot[xmode=log,ymode=log,width=5.5cm,height=5cm,ymajorgrids,title style={align=center},ylabel style={align=center},ylabel={CG iterations\\$\rho=1$},title={$\varepsilon = 10^{-2}$}]
			\addplot[mark options={solid},blue,loosely dashed,mark=*       ] table[x=n, y=2FF]{figs/spaND_lapl2_29370319_rho_1.csv};
			\addplot[mark options={solid},hcorange,solid,mark=o            ] table[x=n, y=2TF]{figs/spaND_lapl2_29370319_rho_1.csv};
			\addplot[mark options={solid},red,densely dashed,mark=triangle*] table[x=n, y=2TT]{figs/spaND_lapl2_29370319_rho_1.csv};			
        \nextgroupplot[xmode=log,ymode=log,width=5.5cm,height=5cm,ymajorgrids,legend entries={
			\texttt{In},
            \texttt{InS},
			\texttt{OrthS}
        },legend pos=outer north east,title style={align=center},title={$\varepsilon = 10^{-4}$}]
			\addplot[mark options={solid},blue,loosely dashed,mark=*       ] table[x=n, y=4FF]{figs/spaND_lapl2_29370319_rho_1.csv};
            \addplot[mark options={solid},hcorange,solid,mark=o            ] table[x=n, y=4TF]{figs/spaND_lapl2_29370319_rho_1.csv};
            \addplot[mark options={solid},red,densely dashed,mark=triangle*] table[x=n, y=4TT]{figs/spaND_lapl2_29370319_rho_1.csv};
        \nextgroupplot[xmode=log,ymode=log,width=5.5cm,height=5cm,ymajorgrids,title style={align=center},ylabel style={align=center}, ylabel={CG iterations\\$\rho=100$}]
			\addplot[mark options={solid},blue,loosely dashed,mark=*       ] table[x=n, y=2FF]{figs/spaND_lapl2_29370319_rho_100.csv};
			\addplot[mark options={solid},hcorange,solid,mark=o            ] table[x=n, y=2TF]{figs/spaND_lapl2_29370319_rho_100.csv};
			\addplot[mark options={solid},red,densely dashed,mark=triangle*] table[x=n, y=2TT]{figs/spaND_lapl2_29370319_rho_100.csv};
        \nextgroupplot[xmode=log,ymode=log,width=5.5cm,height=5cm,ymajorgrids,title style={align=center}]
			\addplot[mark options={solid},blue,loosely dashed,mark=*       ] table[x=n, y=4FF]{figs/spaND_lapl2_29370319_rho_100.csv};
			\addplot[mark options={solid},hcorange,solid,mark=o            ] table[x=n, y=4TF]{figs/spaND_lapl2_29370319_rho_100.csv};
			\addplot[mark options={solid},red,densely dashed,mark=triangle*] table[x=n, y=4TT]{figs/spaND_lapl2_29370319_rho_100.csv};	
    \end{groupplot}
\end{tikzpicture}
\caption{2D $n \times n$ Laplacians: each line represent a given variant: \texttt{In} (interpolative and no scaling), 
\texttt{InS} (interpolative and scaling) and \texttt{OrthS} (orthogonal and scaling), at a given accuracy $\varepsilon$. 
Each dot gives the CG iteration count, and we run the experiments on various problem of size
$N = n^2$, for various $\rho$. The conditioning is roughly proportional to $\rho$. A missing data point means Cholesky broke down and
the preconditioner is not SPD.
This shows that, in general, \texttt{InS} and \texttt{OrthS} are much more accurate than \texttt{In} at a given \rev{$\varepsilon$ and \texttt{OrthS} never breaks down}.
In addition, for small enough $\varepsilon$, the accuracy is roughly independent from the problem size $N$. 
Finally, when $\rho$ is not too extreme, there is little dependency with the condition number.
}
\label{fig:lapl_rho}
\end{figure}
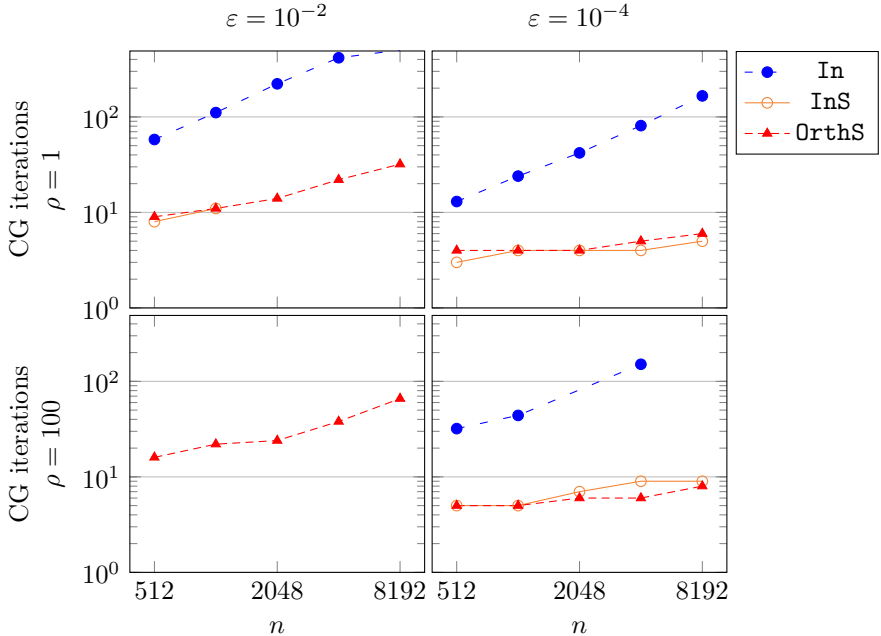

\subsubsection{Non-Regular Problems}

\rev{\autoref{fig:ufl} gives results for three variants on many\footnote{We only excluded \texttt{Queen4147} and \texttt{Bump2911} (for which the solver ran out of memory) as well as the \texttt{Andrews} and \texttt{denormal} cases (which are so ill-conditioned that spaND never converges in less than 500 iterations).} 
of the SPD (real \& square) problems from the 
SuiteSparse matrix collection \cite{davis2011university} with more than $50\,000$ rows and columns.}

Most of these problems come from PDE discretization, but not all. 
\texttt{G2circuit} for instance comes from a circuit simulation problem and \texttt{finan512} comes from a portfolio optimization problem.

For most problems, an accuracy of $\varepsilon = 10^{-2}$ leads to a number of iterations usually less than 100, 
while an accuracy of $\varepsilon = 10^{-4}$ leads usually to less than 10 iterations. 
Only the \texttt{Botonakis/thermomech\_TK} problem needs more than 100 iterations for $\varepsilon = 10^{-6}$.

\begin{figure}
\begin{tikzpicture}
\begin{groupplot}[
    group style={
        group name=my plots,
        group size=1 by 4,
        xlabels at=edge bottom,
        xticklabels at=edge bottom,
        vertical sep=0cm,
        ylabels at=edge left,
        yticklabels at=edge left,
        vertical sep=0.2cm
    },
	xmin=-1,
	xmax=71,
    ymin=1,
    ymax=500,
	footnotesize,
    ybar,
    ytick={1, 3, 10, 30, 100, 300},
    yticklabels={1, 3, 10, 30, 100, 300}, 
    yticklabel style={rotate=-90},
    ylabel style={align=center},
    nodes near coords align={vertical},
    width=0.95\textwidth,
    flexible xticklabels from table={figs/spaND_ufl_all_48519112_tol_0.1.csv}{Matrix}{},
    xticklabel style={tick label style={rotate=90}, font=\tiny},
    xtick=data,
    xtick={0,1,2,3,4,5,6,7,8,9,10,11,12,13,14,15,16,17,18,19,20,21,22,23,24,25,26,27,28,29,30,31,32,33,34,35,36,37,38,39,40,41,42,43,44,45,46,47,48,49,50,51,52,53,54,55,56,57,58,59,60,61,62,63,64,65,66,67,68,69},
	]
\nextgroupplot[
    bar width=0.05cm,
    height=3.5cm,
    legend entries={\texttt{In}, \texttt{InS}, \texttt{OrthS}},
	ymode=log,
	ylabel={\#CG\\$\varepsilon=10^{-1}$},
    ymajorgrids,
]
\addplot[color=black,fill=white]                            table[x expr=\coordindex+0.5, y=FF]{figs/spaND_ufl_all_48519112_tol_0.1.csv};
\addplot[color=black,fill=white,pattern=north east lines, pattern color=hcred, draw=hcred]   table[x expr=\coordindex,     y=TF]{figs/spaND_ufl_all_48519112_tol_0.1.csv};
\addplot[color=black,fill=black]                            table[x expr=\coordindex-0.5, y=TT]{figs/spaND_ufl_all_48519112_tol_0.1.csv};
\nextgroupplot[
    bar width=0.05cm,
    height=3.5cm,
	ymode=log,
    ylabel={\#CG\\$\varepsilon=10^{-2}$},
    ymajorgrids,
]
\addplot[color=black,fill=white]                             table[x expr=\coordindex+0.5, y=FF]{figs/spaND_ufl_all_48519112_tol_0.01.csv};
\addplot[color=black,fill=white,pattern=north east lines, pattern color=hcred, draw=hcred]    table[x expr=\coordindex,     y=TF]{figs/spaND_ufl_all_48519112_tol_0.01.csv};
\addplot[color=black,fill=black]                             table[x expr=\coordindex-0.5, y=TT]{figs/spaND_ufl_all_48519112_tol_0.01.csv};
\nextgroupplot[
    bar width=0.05cm,
    height=3.5cm,
	ymode=log,
    ylabel={\#CG\\$\varepsilon=10^{-4}$},
    ymajorgrids,
]
\addplot[color=black,fill=white]                             table[x expr=\coordindex+0.5, y=FF]{figs/spaND_ufl_all_48519112_tol_0.0001.csv};
\addplot[color=black,fill=white,pattern=north east lines, pattern color=hcred, draw=hcred]    table[x expr=\coordindex,     y=TF]{figs/spaND_ufl_all_48519112_tol_0.0001.csv};
\addplot[color=black,fill=black]                             table[x expr=\coordindex-0.5, y=TT]{figs/spaND_ufl_all_48519112_tol_0.0001.csv};
\nextgroupplot[
    bar width=0.05cm,
    height=3.5cm,
	ymode=log,
    ylabel={\#CG\\$\varepsilon=10^{-6}$},
    ymajorgrids,
]
\addplot[color=black,fill=white]                             table[x expr=\coordindex+0.5, y=FF]{figs/spaND_ufl_all_48519112_tol_1e-06.csv};
\addplot[color=black,fill=white,pattern=north east lines, pattern color=hcred, draw=hcred]    table[x expr=\coordindex,     y=TF]{figs/spaND_ufl_all_48519112_tol_1e-06.csv};
\addplot[color=black,fill=black]                             table[x expr=\coordindex-0.5, y=TT]{figs/spaND_ufl_all_48519112_tol_1e-06.csv};
\end{groupplot}
\end{tikzpicture}
\caption{SuiteSparse matrix collection: result on \rev{many} SPD problems of the SuiteSparse matrix collections with $N \geq 50\,000$. 
The partitioning is only graph-based. Each bar represents the number of CG iterations for a given problem with a given 
variant of the algorithm: \texttt{In} (interpolative and no scaling), 
\texttt{InS} (interpolative and scaling) and \texttt{OrthS} (orthogonal and scaling) at a given accuracy $\varepsilon$. 
The absence of a bar means the algorithm broke down in the face of a non-SPD pivot.
This shows that, as $\varepsilon \to 0$, the algorithm converges on a wide range of problems.
This also shows that the scaling is beneficial in almost all cases. The orthogonal transformations, while not always
better (in terms of accuracy) than the interpolative transformations, do guarantee that the preconditioner stays SPD
and the factorization never breaks down because of indefinite pivots.
}
\label{fig:ufl}
\end{figure}
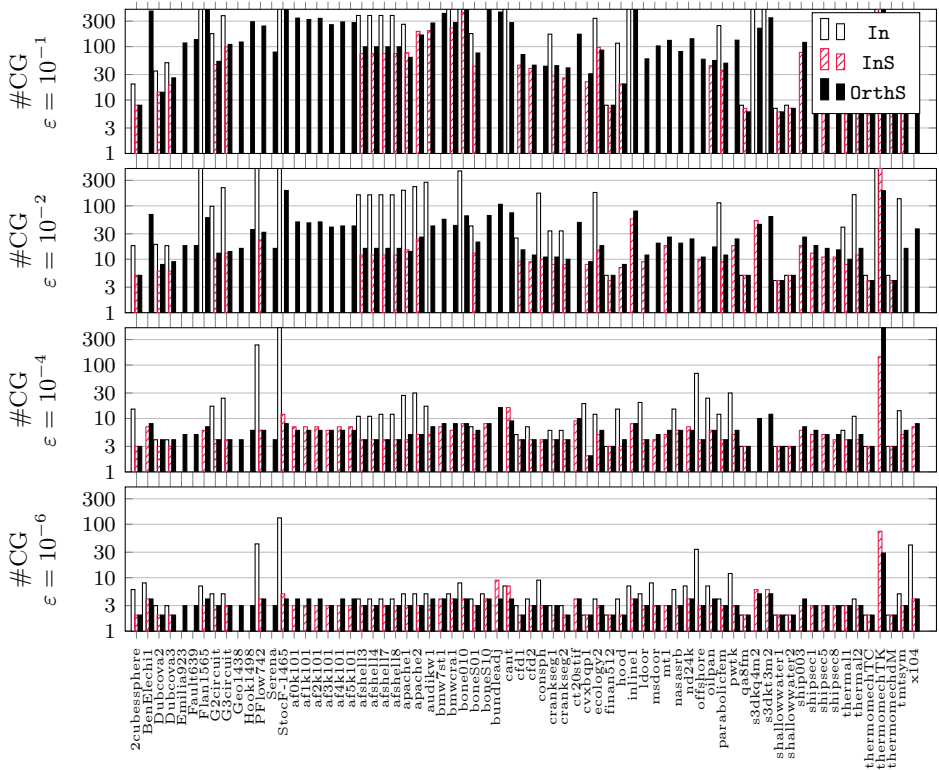

\autoref{fig:profile_ufl} shows a performance profile regarding the CG iteration count. Each plot compares the three
variants for a given accuracy. For a given problem $p$ and a variant $v$, let $CG_{p,v}$ be the CG count and $CG^*_{p}$ 
the best result among the three variants (\rev{\texttt{In}, \texttt{InS} and \texttt{OrthS}}), for a problem $p$. Then each curve is defined as
\[ T_{v}(t) = \frac{\#\left\{p \in P \left| \frac{CG_{p,v}}{CG^*_p} \leq t \right. \right\}}{\#P} \]
Each value $T_{v}(t)$ basically represent the fraction of problems where variant $v$ 
is within $t$ times the best algorithm. \rev{Problems for which the factorization broke down are given $CG_{p,v} = \infty$ and for the others the CG count was capped at 500.}

\rev{
On $\varepsilon=10^{-1}$ and $\varepsilon=10^{-2}$, \texttt{InS} and \texttt{In} often break down, so \texttt{OrthS} is significantly better.
When \texttt{InS} does not break down however, it has similar performances as \texttt{OrthS}.
On $\varepsilon=10^{-4}$, \texttt{InS} rarely breaks down, and performances are very similar to \texttt{OrthS} throughout all the runs.
On $\varepsilon=10^{-6}$, most cases converge in a couples iterations, so the three variants have similar performances.
The plots clearly shows that \texttt{OrthS} is the optimal strategy, being within at most $2$ of the optimal in the worst case,
and being often the winning algorithm. }

\begin{figure}
    \begin{tikzpicture}
        \begin{groupplot}[
            group style={
                group name=my plots,
                group size=2 by 2,
                xlabels at=edge bottom,
                xticklabels at=edge bottom,
                ylabels at=edge left,
                yticklabels at=edge left,
                vertical sep=1cm,
                horizontal sep=0.5cm,
            },
            xtick={1, 2, 5, 10},
            xticklabels={1, 2, 5, 10}, 
            ytick={0, 0.5, 1},
            yticklabels={0, 0.5, 1},
            ymin=-0.1, ymax=1.1,
            xlabel={$t$, performance ratio},
            ylabel={Fraction of\\problems solved},
            ylabel style={align=center},
            ymajorgrids,
            xmajorgrids,
	        ]
            \nextgroupplot[xmode=log,width=6cm,height=5cm,title={$\varepsilon=10^{-1}$}]
                \addplot+[no marks, thick, loosely dashed, black] table[x=t,y=0.1_TT]{figs/spaND_ufl_all_perf_48519112.csv};
                \addplot+[no marks, thick, red] table[x=t,y=0.1_TF]{figs/spaND_ufl_all_perf_48519112.csv};
                \addplot+[no marks, thick, densely dashed, blue] table[x=t,y=0.1_FF]{figs/spaND_ufl_all_perf_48519112.csv};
            \nextgroupplot[xmode=log,width=6cm,height=5cm,title={$\varepsilon=10^{-2}$}]
                \addplot+[no marks, thick, loosely dashed, black] table[x=t,y=0.01_TT]{figs/spaND_ufl_all_perf_48519112.csv};
                \addplot+[no marks, thick, red] table[x=t,y=0.01_TF]{figs/spaND_ufl_all_perf_48519112.csv};
                \addplot+[no marks, thick, densely dashed, blue] table[x=t,y=0.01_FF]{figs/spaND_ufl_all_perf_48519112.csv};
            \nextgroupplot[xmode=log,width=6cm,height=5cm,title={$\varepsilon=10^{-4}$}]
                \addplot+[no marks, thick, loosely dashed, black] table[x=t,y=0.0001_TT]{figs/spaND_ufl_all_perf_48519112.csv};
                \addplot+[no marks, thick, red] table[x=t,y=0.0001_TF]{figs/spaND_ufl_all_perf_48519112.csv};
                \addplot+[no marks, thick, densely dashed, blue] table[x=t,y=0.0001_FF]{figs/spaND_ufl_all_perf_48519112.csv};
                \nextgroupplot[xmode=log,width=6cm,height=5cm,title={$\varepsilon=10^{-6}$},legend pos=south east,legend entries={\texttt{OrthS}, \texttt{InS}, \texttt{In}}]
                \addplot+[no marks, thick, loosely dashed, black] table[x=t,y=1e-06_TT]{figs/spaND_ufl_all_perf_48519112.csv};
                \addplot+[no marks, thick, red] table[x=t,y=1e-06_TF]{figs/spaND_ufl_all_perf_48519112.csv};
                \addplot+[no marks, thick, densely dashed, blue] table[x=t,y=1e-06_FF]{figs/spaND_ufl_all_perf_48519112.csv};
        \end{groupplot}
    \end{tikzpicture}
    \caption{Performance profile for all the SuiteSparse experiments (\autoref{fig:ufl}). Higher is better. 
    The performance criterion is $\#CG$, the number of CG iterations.     
    Each point $T_v(t)$ gives the fraction of problems for which variant $v$ completed with \rev{a CG iterations count} less than $t$ times the best variant. 
    An excellent method is one that starts at $t=1$ close to 1 and quickly reaches 1 as $t$ increases. 
    This means that this method outperforms the other methods in almost all cases.
    \texttt{InS} and \texttt{OrthS} typically have the same number of iterations, but \texttt{InS} sometimes leads to a non-SPD preconditioner, hence the large difference in performances. 
    \texttt{In} typically leads to a much larger iteration count. 
    Looking at the bottom left figure ($\varepsilon = 10^{-4}$) for example, 
    we see that for half of the problems \texttt{In} has a \rev{CG count} more than 10 times greater than the best variant. 
    For all cases, the \texttt{OrthS} is within a factor of 2 of the optimal CG count. 
    This shows the importance of \rev{both the scaling and the orthogonal transformations}.}
    \label{fig:profile_ufl}
\end{figure}
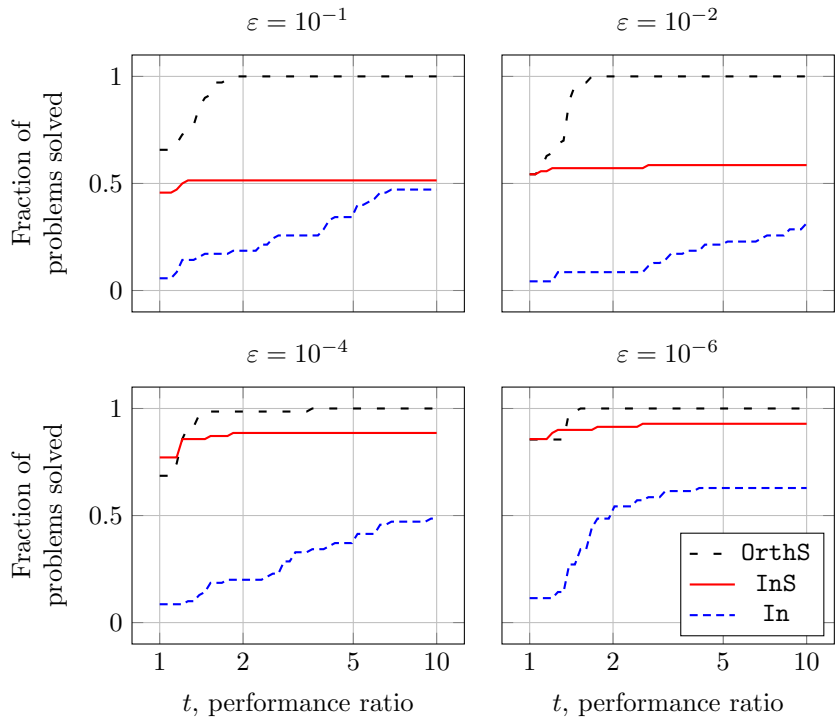

Further, using orthogonal transformations guarantees that the approximation stays SPD, allowing the algorithm to not break down even 
for high $\varepsilon$'s.
The number of iterations of \texttt{OrthS} is not always strictly smaller than the \texttt{InS} variant, while it is for the regular Laplacian examples. 
However, the extra robustness (no need for pivoting) of the orthogonal transformations make them quite attractive in 
practice for SPD problems.

We also point out that previous work \cite{ho2016hierarchical} was restricted to standard elliptic model problems. 
To the best of our knowledge, this is the first application of this algorithm to a wide range of problems.

\subsection{Scalings with problem size}\label{sec:scalings_with_problem_size}

We now consider scalings, i.e., how does the algorithm perform as $N$ grows.
\autoref{fig:scaling_laplacians} shows the evolution of the top separator size right before elimination (top) and the 
number of CG iterations (bottom) for $\rho = 1$ and $\rho = 100$ for \emph{3D} problems generated as in \autoref{sec:2D}
with a classic 7-points stencil.
\rev{From} now on, we will only consider the scaling \& orthogonal method (\texttt{OrthS}).

This figure shows two properties of the algorithm:
\begin{itemize}
    \item the top separator size ($\st$) typically grows like $\OO{N^{1/3}}$, regardless of $\varepsilon$;
    \item for small enough $\varepsilon$, the number of CG iterations is roughly $\OO{1}$.
\end{itemize}
Combining those two properties, we can expect \rev{(see \autoref{sec:complexity})}, for small enough $\varepsilon$,
\begin{itemize}
    \item a factorization time of \rev{$\OO{N \log N}$};
    \item a solve time of \rev{$\OO{N \cdot 1} = \OO{N}$},
\end{itemize}
which implies that the algorithm scales roughly linearly with $N$.

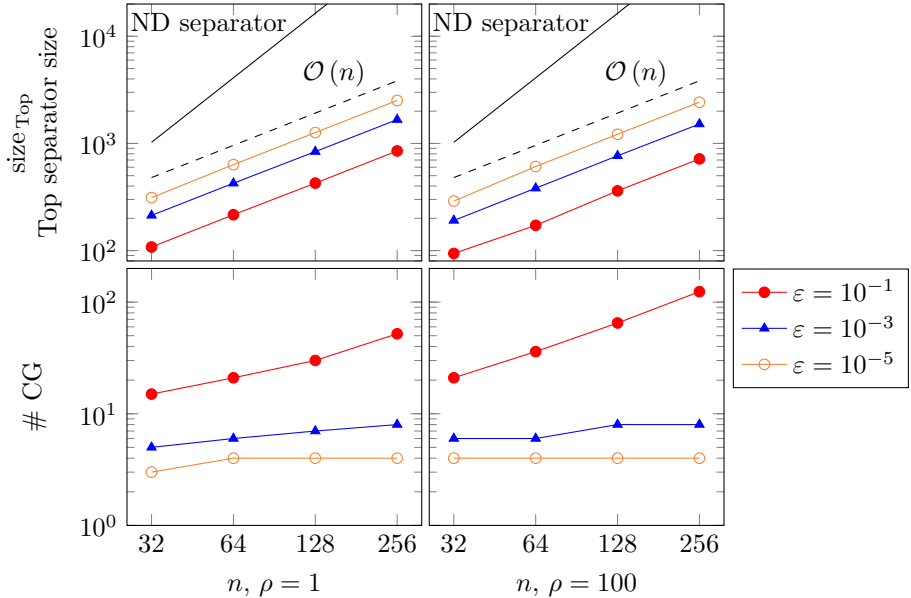
\begin{figure}
    \centering
    \begin{tikzpicture}
        \begin{groupplot}[
            group style={
                group name=lapl3d_1,
                group size=2 by 2,
                xlabels at=edge bottom,
                xticklabels at=edge bottom,
                vertical sep=0.1cm,
                horizontal sep=0.1cm,
                ylabels at=edge left,
                yticklabels at=edge left,
            },
            xtick={32,64,128,256},
            xticklabels={32,64,128,256},
        ]
        \nextgroupplot[xmode=log,ymode=log,width=5.5cm,height=5cm,ylabel style={align=center},ylabel={$\st$\\Top separator size},ymin=80,ymax=20000]
            \addplot [red,mark=*]          table[x=N, y=stop]{figs/lapl3d_1_1e-1.dat};
            \addplot [blue,mark=triangle*] table[x=N, y=stop]{figs/lapl3d_1_1e-3.dat};
            \addplot [hcorange,mark=o]     table[x=N, y=stop]{figs/lapl3d_1_1e-5.dat};
            \addplot [domain=32:256,solid]{x*x};
            \addplot [domain=32:256,dashed]{15*x};
            \node at (axis cs:52,13000) {ND separator};
            \node at (axis cs:150,4500) {$\OO{n}$};
        \nextgroupplot[xmode=log,ymode=log,width=5.5cm,height=5cm,ymin=80,ymax=20000]
            \addplot [red,mark=*]          table[x=N, y=stop]{figs/lapl3d_100_1e-1.dat};
            \addplot [blue,mark=triangle*] table[x=N, y=stop]{figs/lapl3d_100_1e-3.dat};
            \addplot [hcorange,mark=o]     table[x=N, y=stop]{figs/lapl3d_100_1e-5.dat};
            \addplot [domain=32:256,solid]{x*x};
            \addplot [domain=32:256,dashed]{15*x};
            \node at (axis cs:52,13000) {ND separator};
            \node at (axis cs:150,4500) {$\OO{n}$};
        \nextgroupplot[xmode=log,ymode=log,width=5.5cm,height=5cm,ylabel={\# CG},ymin=1,ymax=200,xlabel={$n$, $\rho=1$}]
            \addplot [red,mark=*]          table[x=N, y=CG]{figs/lapl3d_1_1e-1.dat};
            \addplot [blue,mark=triangle*] table[x=N, y=CG]{figs/lapl3d_1_1e-3.dat};
            \addplot [hcorange,mark=o]     table[x=N, y=CG]{figs/lapl3d_1_1e-5.dat};
        \nextgroupplot[xmode=log,ymode=log,width=5.5cm,height=5cm,ymin=1,ymax=200,xlabel={$n$, $\rho=100$},legend entries={$\varepsilon=10^{-1}$, $\varepsilon=10^{-3}$, $\varepsilon=10^{-5}$},legend pos=outer north east]
            \addplot [red,mark=*]          table[x=N, y=CG]{figs/lapl3d_100_1e-1.dat};
            \addplot [blue,mark=triangle*] table[x=N, y=CG]{figs/lapl3d_100_1e-3.dat};
            \addplot [hcorange,mark=o]     table[x=N, y=CG]{figs/lapl3d_100_1e-5.dat};
        \end{groupplot}
    \end{tikzpicture}
    \caption{3D $n \times n \times n$ Laplacian results for $\rho = 1$ (left) and $\rho = 100$ (right) using \texttt{OrthS}. We see that $\st$ (top separator final
size, i.e., right before elimination)  scales like $\OO{n}$, and that increasing the accuracy (decreasing $\varepsilon$) 
essentially adds a constant; it does not change the scaling. This should be compared with the classical ND separator size
(solid like), equal to $n^2$.
In addition, for a small enough $\varepsilon$, the CG iteration count becomes virtually constant.
Both those \rev{facts} mean the algorithm can be expected to have complexity $\OO{N}$ (see \autoref{sec:complexity}).}
    \label{fig:scaling_laplacians}
\end{figure}

\subsection{Timings and Memory Usage}

We now study the efficiency of the algorithm in terms of time (factorization and solve time) and memory usage on ``real-life'' problems.
To evaluate our algorithm, we use the following two metrics:
\begin{itemize}
    \item the factorization and solve time ($\tf$ and $\ts$);
    \item the memory footprint ($\mf$, the number of non-zeros in the preconditioner $M$).
\end{itemize}

\paragraph{SuiteSparse} \autoref{tab:ufl_results} shows the results on two specific problems from the SuiteSparse collection \cite{davis2011university}, 
inline and audikw. 
For both problems, we see that the ``sweet-spot'' in terms of minimal time-to-solution is not for high $\varepsilon$, but for much smaller $\varepsilon$. 
For the audikw problem, the optimal is when using $\varepsilon = 10^{-2}$, and for inline, $10^{-4}$ gives optimal result. The $\st$ for inline are overall much smaller than for audikw. This is usually an indication that the problem is near 2D, for which $\st$ is typically $\OO{1}$.
Those problems are of fairly small size and, as such, direct solvers (with smaller constants and better implementations) 
remain competitive.

\begin{table}
    \centering
    \begin{tabular}{l|lllllll}
        Problem ($N$)   & $\varepsilon$         & $\tp$ (s.) & $\tf$ (s.)    & $\ts$ (s.)    & $\ncg$      & $\st$                 & $\mf$ $(10^9)$    \\
        \hline
        audikw\_1       & $10^{-1}$             & \rev{96}   & \rev{128}     & \rev{512}     & \rev{277}   & \rev{322}             & \rev{0.46}        \\ 
        943\,695        & $10^{-2}$             & \rev{95}   & \rev{268}     & \rev{103}     & \rev{42}    & 606                   & \rev{0.73}        \\
                        & $10^{-4}$             & \rev{95}   & \rev{500}     & \rev{18}      & \rev{7}     & 1175                  & \rev{1.08}        \\
        \hline
        inline\_1       & $10^{-1}$             & \rev{40}   & \rev{8}       & \rev{$>224$}  & $>500$      & \rev{1}               & \rev{0.11}        \\
        503\,712        & $10^{-2}$             & \rev{41}   & \rev{13}      & \rev{44}      & \rev{80}    & 13                    & \rev{0.13}        \\
                        & $10^{-4}$             & \rev{41}   & \rev{18}      & \rev{5}       & \rev{8}     & 19                    & \rev{0.16}        \\
    \end{tabular}
    \caption{Some SuiteSparse performance results using \texttt{OrthS}. Completely general partitioning (no geometry information used) 
             \rev{using \autoref{algo:ordering_clustering} with Metis as a vertex-separation routine}.
We see that the algorithm \rev{does} converge when $\varepsilon \to 0$. The sweet spot varies for both problems.
Notice that inline\_1 has a very small $\st$, characteristic of near-2D problems, \rev{while} the top separator has a much
larger size for audikw\_1.}
    \label{tab:ufl_results}
\end{table}

\paragraph{Ice-sheet modeling problem}
\autoref{tab:ice_sheet_results} gives the result on an ice-sheet modeling problem \cite{tezaur2015albany}. 
This problem comes from the modeling of ice flows on Antarctica using a finite-element discretization. 
The problem is challenging because of the high variations in background field and the near-singular blocks in the matrix, leading to a condition number of more than $10^{11}$.  This problem is nearly 2D. The graph in the $x, y$ plane is regular but non-square. 
It is then extruded in the $z$ direction.

We illustrate the partitioning (top-left) and one layer of the solution (top-right, with a random right-hand side) on a log scale. Note the high variations in scales in the solution. This makes the problem very ill conditioned and hard to solve with classical preconditioners.
Since the problem is (nearly) 2D and we are given the geometry, we partition the matrix in the $xy$ plane and extrude the
partitioning in the $z$ direction. The partitioning uses a classical recursive coordinate bisection approach \cite{berger1987partitioning}.

We use two sequences of matrices with a different number of layers in the $z$ direction. 
We see that $\st$ grows very slowly, close (but not exactly) like $\OO{1}$ for each set of problems. 
This is typical of 2D or near-2D problems. 
The memory use is roughly linear for each set of problems, and the factorization time is growing almost linearly. 
This validates the effectiveness of the algorithm.

We also compared the algorithm against a direct method (simply using \algo{} with no compression but otherwise with the same parameters). 
The results are in the ``Direct'' column. 
We note the very poor scaling of the direct method; our algorithm, on the other hand, performs much better.
In addition, we also compared the algorithm to out-of-the-box algebraic multigrid (AMG, a classical AMG) and Incomplete LU(0). 
On this specific problem, AMG simply did not converge in less than 500 iterations, the residual stalling around 1.0. 
While specifically designed AMG can and does solve this problem well \cite{tezaur2015albany}, 
this illustrates that out-of-box algorithms cannot always efficiently solve very ill-conditioned problems. 
Because of this, we do not report those results.
We finally tested Ifpack2's ILU(0) \cite{Ifpack2} with GMRES. We tested two \rev{orderings}, horizontal (layer-wise) and vertical (column-wise). 
The layer-wise ordering gave (by far) the best performances and we report only this one.
However, while it is competitive for small problems, it cannot solve large problems because the number of iterations grows quickly, 
making the algorithm too expensive. 
This illustrate the strong advantage of spaND: with a nearly constant number of iterations, we do not suffer from this deterioration of the preconditioner 
and can solve larger problems.

We note that those results can also be compared with recent work using LoRaSp \cite{pouransari2017fast, chena2018robust} on the same matrices. 
Overall, while the scaling with $N$ is similar, spaND exhibits better constants.

\begin{table}
    \centering
    \includegraphics[width=0.45\textwidth]{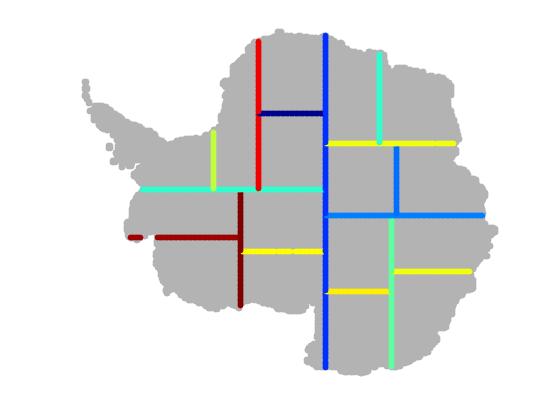}
    \includegraphics[width=0.45\textwidth]{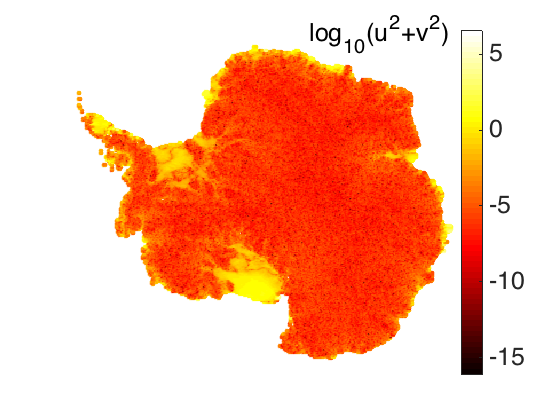}\\
    {\small
     \setlength{\tabcolsep}{3pt}
    \begin{tabular}{l|llllll|l|l}
                             & \multicolumn{3}{l}{spaND}                     &               &               &                       & Direct            & ILU(0)          \\
        $N$                  & $\tp$         & $\tf$         & $\ts$         & $\ncg$        & $\st$         & $\mf$                 & $\tf+\ts$         & $\ts$ ($\text{n}_{\text{GMRES}}$)  \\
                             & (s.)          & (s.)          & (s.)          &               &               & $(10^9)$              & (s.)              & (s.)           \\
        \hline
        5 layers             & & & & & & & & \\
        629\,544 (16 km)     & \rev{2}       & \rev{7  }     & \rev{3 }      & 7             & 78            & 0.15                  & \rev{19}          & \rev{23} (92)          \\
        2\,521\,872 (8 km)   & \rev{10}      & \rev{28 }     & \rev{14}      & 8             & 88            & 0.59                  & \rev{126}         & \rev{286} (182)        \\
        10\,096\,080 (4 km)  & \rev{50}      & \rev{124}     & \rev{89}      & 10            & 99            & 2.40                  & \rev{1036}        & \rev{7137} (720)       \\
        \hline
        10 layers            & & & & & & & & \\
        1\,154\,164 (16 km)  & \rev{4}       & \rev{23 }     & \rev{7  }     & 7             & 137           & 0.42                  & \rev{86}          & \rev{42} (93)          \\
        4\,623\,432 (8 km)   & \rev{20}      & \rev{97 }     & \rev{34 }     & 8             & 147           & 1.73                  & \rev{725}         & \rev{544} (181)        \\
        18\,509\,480 (4 km)  & \rev{100}     & \rev{538}     & \rev{311}     & 10            & 159           & 6.80                  & ---               & \rev{18680} (745)      \\
    \end{tabular}}
    \\ \vspace{0.2cm}
    \begin{tikzpicture}
        \begin{loglogaxis} [
                width=2in,
                height=2in,
                xtick={6, 24, 96},
                xticklabels={16km, 8km, 4km},
                ylabel={$\tf + \ts$ (s.)},
            ]
            \addplot [domain=6:96,dashed,black]{15*x};
            \addplot [red,mark=*] coordinates { (6,30) (24,131) (96,849) };
            \addplot [blue,mark=triangle*] coordinates { (6,10) (24,42) (96,213) };
        \end{loglogaxis}
    \end{tikzpicture}
    \begin{tikzpicture}
        \begin{loglogaxis} [
                legend pos=outer north east,
                width=2in,
                height=2in,
                xtick={6, 24, 96},
                xticklabels={16km, 8km, 4km},
                ylabel={$\mf$ ($10^9$)},
            ]
            \addplot [domain=6:96,dashed,black]{1e-1*x};
            \addplot [red,mark=*] coordinates { (6,0.42) (24, 1.73) (96,6.80) };
            \addplot [blue,mark=triangle*] coordinates { (6,0.15) (24, 0.59) (96,2.40) };
            \legend {$\OO{N}$,10 layers, 5 layers};
        \end{loglogaxis}
    \end{tikzpicture}
    \caption{Ice Sheet results. Unregular geometric partitioning, $\varepsilon = 10^{-2}$, \texttt{OrthS}. The top left picture illustrates the separators (for 
the top 5 levels) and the top right picture shows the solution (for a random right-hand side $b$) on log scale. 
--- indicates the direct method ran out of memory.
We run ILU with 2 ordering: layer-wise ordering and vertical column-wise ordering. The later lead to very poor convergence and is not shown here.
This problem is very ill-conditioned and typically very hard to solve using out-of-the-box preconditioners. spaND, on the
other hand, solves the problem well and \rev{scales} near-linearly with the problem size.}
    \label{tab:ice_sheet_results}
\end{table}

\paragraph{SPE benchmark}
\autoref{fig:spe_results} gives the results on a cubic slide of the SPE (Society of Petroleum Engineering) benchmark \cite{christie2001tenth}, a classical benchmark to evaluate oil \& gas exploration codes. 
This matrix models a porous media flow. 
This problem is particularly challenging for direct methods since it resembles a 3D cubic problem and \rev{leads} to a high complexity. 
On the other hand, it can be solved quite efficiently with classical preconditioners like AMG or ILU.

We use various values of $n$, and the problem is then of size $N = n^3$. 
The bottom pictures show the $\st$ and $\mf$ scaling with $N$. 
We see that $\st$ grows roughly like $\OO{N^{1/3}}$; this is typical of $3D$ problems. 
The memory use grows linearly with $N$. Furthermore, the number of CG iterations is constant for all resolutions. 
This serves \rev{as} another validation of the ability of spaND to solve large-scale problems. 
In the last column, we show the result using the direct solver. 
Since it is a direct solver, the memory use is too \rev{great}, and we cannot solve the \rev{8M} problem. 
Furthermore, the time to solve the 2M problem is about 10 times more than using spaND. 
This shows the limitations of direct \rev{solvers} for solving large 3D problems for which the fill-in is too significant.

\begin{table}
    \centering
    \includegraphics[width=0.45\textwidth]{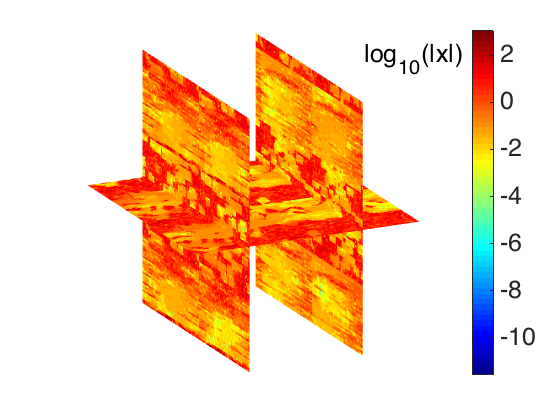}\\
    \begin{tabular}{ll|llllll|l}
                    &                 & \algo{}       &               &               &               &               &                       & Direct.     \\
        $n$         & $N = n^3$       & $\tp$         & $\tf$         & $\ts$         & $\ncg$        & $\st$         & $\mf$                 & $t_{F}+\ts$  \\
                    &                 & (s.)          & (s.)          & (s.)          &               &               & ($10^9$)              & (s.) \\   
        \hline
        $128$       & $2\,097\,152$   & \rev{7}       & \rev{55}      & \rev{18}      & 13            & 504           &  0.62                 & \rev{743}     \\
        $160$       & $4\,096\,000$   & \rev{18}      & \rev{118}     & \rev{44}      & 14            & 635           &  1.2                  & \rev{3677}    \\
        $200$       & $8\,000\,000$   & \rev{40}      & \rev{254}     & \rev{102}     & 16            & 962           &  2.5                  & ---      \\
        $252$       & $16\,003\,008$  & \rev{87}      & \rev{650}     & \rev{256}     & 14            & 891           &  5.0                  & ---      \\ 
    \end{tabular} \\ \vspace{0.2cm}
    \begin{tikzpicture}
        \begin{loglogaxis} [
                legend pos=north west,
                width=2in,
                height=2in,
                xtick={2097152,4096000,8000000,16003008},
                xticklabels={2M, 4M, 8M, 16M},
                ylabel={$\st$},
            ]
            \addplot [domain=2000000:16000000,dashed,red]{0.04*x^(2/3)};
            \addplot [domain=2000000:16000000,dashed,black]{3*x^(1/3)};
            \addplot [blue,mark=*] coordinates { (2097152, 504) (4096000,635) (8000000,962) (16003008,891) };
            \node at (axis cs:4000000,1700) {$\OO{N^{2/3}}$};
            \node at (axis cs:11000000,500) {$\OO{N^{1/3}}$};
        \end{loglogaxis}
    \end{tikzpicture}
    \begin{tikzpicture}
        \begin{loglogaxis} [
                legend pos=north west,
                width=2in,
                height=2in,
                xtick={2097152,4096000,8000000,16003008},
                xticklabels={2M, 4M, 8M, 16M},
                ylabel={$\mf$},
            ]
            \addplot [domain=2000000:16000000,dashed,red]{1e9*0.000000005*x^(4/3)};
            \addplot [domain=2000000:16000000,dashed,black]{1e9*0.0000005*x};
            \addplot [blue,mark=*] coordinates { (2097152, 0.65e9) (4096000,1.2e9) (8000000,2.5e9) (16003008,5.0e9) };
            \node at (axis cs:4500000,1e10) {$\OO{N^{4/3}}$};
            \node at (axis cs:12000000,8e9) {$\OO{N}$};
        \end{loglogaxis}
    \end{tikzpicture}
    \caption{SPE results. Regular geometric partitioning, $\varepsilon = 10^{-2}$, \texttt{OrthS}. ``---'' indicates the direct method ran out of memory.
This problem is a regular cube, and hence very hard to solve using a direct method, since the separators are very large.
 spaND does not suffer from this problem and can solve this problem well, with a near (but not exactly) linear scaling
with the problem size.}
    \label{fig:spe_results}
\end{table}

\subsection{Profiling}

\autoref{fig:profiling_mem} shows the (cumulative) memory taken by $M$ in \algo{}, compared to the direct method.
This shows clearly the effect of the approximation. 
At the beginning, memory increases slowly. 
Then, we keep eliminating and going up the tree and elimination becomes more and more expansive. 
The sparsification, however, allows us to greatly decrease the memory use by reducing the separator's sizes. 
In this specific example, sparsification is skipped for the first four levels. 
This can be seen on \autoref{fig:profiling_mem}, where \algo{}'s level 5 memory use is slightly greater than the Direct method. 
After that, however, it remains almost constant.
\begin{figure}
    \centering
    \begin{tikzpicture}
        \begin{axis}[
            legend pos=north west,
            width=4in,
            height=2in,
            xlabel={Level},
            ylabel={Non zeros in $M$},
            legend entries={Direct, \algo{}},
            ]
        \addplot[red,mark=*] table[x=lvl,y=direct]{figs/mem_spe_4m.dat};
        \addplot[blue,mark=triangle*] table[x=lvl,y=spand]{figs/mem_spe_4m.dat};
        \end{axis}
    \end{tikzpicture}
    \caption{Memory profiling of the SPE 4M problem. Each dot shows the total (cumulative) memory used by the partial 
    preconditioner up to this level in the elimination. We compare spaND to a direct method using Nested Dissection.
Thanks to the sparsification (started at level \rev{5}), the memory stays well under control, while a direct method takes more
and more memory as the elimination proceeds.}
    \label{fig:profiling_mem}
\end{figure}
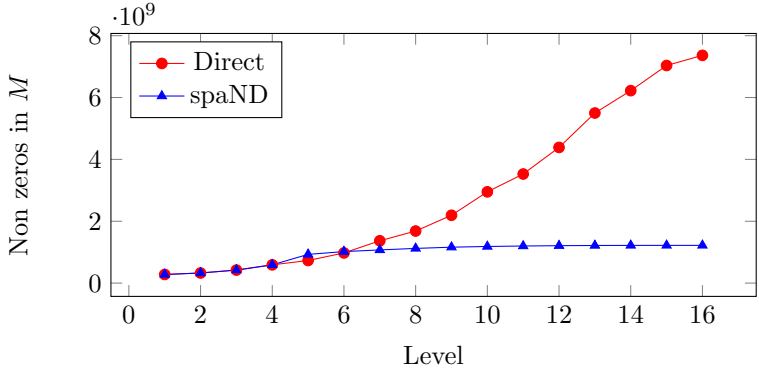

\autoref{fig:profiling} shows profiling (traces) when solving a larger 16M SPE problem. This clearly shows the advantage 
of the algorithm. When using a direct method, elimination becomes excessively slow when reaching the top of the tree, 
and the time spent at the last level usually dominates. For instance in this specific problem, the last elimination would 
require factoring a matrix of size approximately $252^2 \times 252^2 = 63\,504 \times 63\,504$ (approx. 32GB!) that is completely 
dense. 
Our algorithm, on the other hand, spends more time at the early levels in the tree eliminating dofs and sparsifying
separators (see the large brown bar at level 5). 
As a result, the time actually \emph{decreases} as we reach higher levels in the tree. 
This makes for a much more efficient solver.

Notice that in this example, we start the sparsification at level 5 (i.e., we skip it for four levels). 
In our experiments, this gives the best results. Starting earlier leads to very high ranks (i.e., there is not
much to compress), while delaying it too much leads to too large matrices $A_{pn}$ for which RRQR becomes excessively
slow.

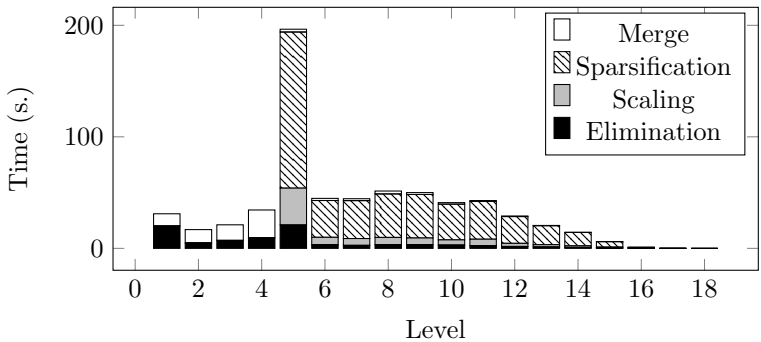
\begin{figure}
    \centering
    \begin{tikzpicture}
        \begin{axis}[
            ybar stacked,
            bar width=10pt,
            width=4in,
            height=2in,
            legend entries={Elimination, Scaling, Sparsification, Merge},
            reverse legend,
            xlabel={Level},
            ylabel={Time (s.)},
            ]
        \addplot+[ybar,color=black,fill=black]                          table[x=lvl,y=elim]{figs/time_spe_16m.dat};
        \addplot+[ybar,color=black,fill=black!25]                         table[x=lvl,y=scal]{figs/time_spe_16m.dat};
        \addplot+[ybar,color=black,fill=white,pattern=north west lines] table[x=lvl,y=spar]{figs/time_spe_16m.dat};
        \addplot+[ybar,color=black,fill=white]                          table[x=lvl,y=merg]{figs/time_spe_16m.dat};
        \end{axis}
    \end{tikzpicture}
    \caption{Time profiling of the SPE 16M problem. Each bar represents the time spent at each level in the elimination.
Unlike direct methods, most of the compute time is spent at the first levels (near the leaves), where we have to solve
many small problems. A direct method would likely be faster at the beginning, but much slower near the end, where the
fronts become very large and have to be factored exactly. \rev{Sparsification time spikes at level 5 when it is triggered. 
Starting sparsification sooner is inefficient since the blocks are not low-rank enough, and the time spent in the low-rank factorizations is then wasted.}}
    \label{fig:profiling}
\end{figure}

\section{Conclusion}

In this paper we developed a sparsified Nested Dissection algorithm. 
The algorithm combines ideas from Nested Dissection (a fast direct method) and low-rank approximations to 
reduce the separator sizes. The result is an approximate factorization that can be computed in near-linear time and
results in an efficient preconditioner.

We note that it differs from the ``classical'' way of accelerating sparse direct solvers (like MUMPS with BLR and Pastix
with HODLR). 
Instead of using $\H$-algebra to compress large fronts, it simply keeps the fronts small throughout the algorithm by 
sparsifying them at each step of the algorithm. 

Prior work in this area included the HIF algorithm \cite{ho2016hierarchical}. 
While our work resembles it, HIF is limited to $n \times n \times n$ regular problems \cite{ho2016hierarchical} and
does not use either the block diagonal scaling or orthogonal transformations.
The LoRaSp algorithm \cite{pouransari2017fast} is also similar.
LoRaSp's performances however may degrade when the ranks at the leaf level are not small and does not have the same sparsity guarantees \cite{chena2018robust}. 
The ordering and the ability to skip compression for some levels fixes this.

We discuss three variants of the algorithm, depending on the low-rank approximation methods (interpolative or orthogonal)
and the prior use, or not, of scaling.
We showed through extensive numerical experiments that the scaling has a large impact on the preconditioner's accuracy.
In addition, the use of orthogonal transformation implies that the algorithm does not break down even when $\varepsilon \approx 1$.

We then tested the algorithm on both ill-conditioned problems (typically hard for preconditioners) and ``cubic'' problems
(typically hard for direct methods). On these problems, spaND is very efficient, with very favorable
scaling for the factorization and near-constant CG iteration count.

Multiple research directions remain unexplored.
The compression algorithm used was a simple (but still quite expensive) RRQR algorithm. Other fast algorithms could be
used, like randomized methods or skeletonized interpolation (where the $\ske$ of the interpolative factorization are picked
a-priori using some heuristic). These techniques could greatly accelerate the compression step.
The loss of accuracy remains to be studied.

Expanding the algorithm to non-SPD or non-symmetric systems is conceptually straightforward. 
If $A$ is not SPD, one can simply use the $LDL^\top$ factorization in the elimination step. Note that in this case, 
(symmetric) pivoting may be required and the algorithm may break down.
If $A$ is not symmetric, we need to compress $A_{pn}$ and $A_{np}$
and use the obtained basis on both the left and the right.
The resulting preconditioner can then be coupled with GMRES instead of CG.

The partitioning algorithm is well-suited for matrices arising from the discretization of elliptic PDE's, 
where we know that well-separated clusters have low-numerical rank. It would be interesting to explore other partitioning
algorithms, for instance for indefinite matrices coming from Maxwell's equations.

Finally, we mention that spaND exhibits more parallelism than direct methods. 
Indeed, most of the work occurs near the leaves of the tree. This means less synchronization and more parallelism. 
This is in contrast with direct methods based on Nested Dissection where the bottleneck is usually the factorization of 
the top separator at the root of the tree.

\section*{Acknowledgement}

Some of the computing for this project was performed on the Sherlock cluster. We would like to thank Stanford University and the Stanford Research Computing Center for providing computational resources and support that contributed to these research results.

This work was partly funded by the U.S. Department of Energy National Nuclear Security Administration under Award Number DE-NA0002373-1 
and partly funded by an LDRD research grant from Sandia National Laboratories. 
Sandia National Laboratories is a multimission laboratory managed and operated by National Technology and Engineering Solutions of Sandia, 
LLC., a wholly owned subsidiary of Honeywell International, Inc., for the U.S. Department of Energy's National Nuclear Security Administration under contract DE-NA-0003525.

We thank Cindy Orozco for the idea of the modified Nested Dissection algorithm, and Bazyli Klockiewicz for his insightful comments on the numerical algorithms.

\bibliography{references}
\bibliographystyle{siamplain}

\end{document}